\tikzset{->-/.style={decoration={
  markings,
  mark=at position #1 with {\arrow{>}}},postaction={decorate}}}
\tikzset{middlearrow/.style={
        decoration={markings,
            mark= at position 0.5 with {\arrow{#1}} ,
        },
        postaction={decorate}
    }
}
\newcommand{\onenn}[1]{{\mathbf 1}_{#1}}
\newcommand{\arxiv}[1]{\href{http://arxiv.org/abs/#1}{\tt arXiv:\nolinkurl{#1}}}
\theoremstyle{plain}
\theoremstyle{definition}
\theoremstyle{remark}
\theoremstyle{definition}
\newtheorem{thm}{Theorem}[section]
\newtheorem{cor}[thm]{Corollary}
\newtheorem{lem}[thm]{Lemma}
\newtheorem{rem}[thm]{Remark}
\newtheorem{prop}[thm]{Proposition}
\newtheorem{defn}[thm]{Definition}
\newcommand{\Ucat}{\cal{U}}
\newcommand{\UcatD}{\dot{\cal{U}}}
\newcommand{\UA}{{_{\cal{A}}\dot{{\bf U}}}}
\newcommand{\maps}{\colon}
\newcommand{\xsum}[2]{
  \xy
  (0,.4)*{\sum};
  (0,3.7)*{\scs #2};
  (0,-2.9)*{\scs #1};
  \endxy
}
\newcommand{\xprod}[2]{
  \xy
  (0,.4)*{\prod};
  (0,3.7)*{\scs #2};
  (0,-2.9)*{\scs #1};
  \endxy
}
\newcommand{\refequal}[1]{\xy {\ar@{=}^{#1}
(-1,0)*{};(1,0)*{}};
\endxy}
\newcommand{\Hom}{{\rm Hom}}
\renewcommand{\to}{\rightarrow}
\newcommand{\excise}[1]{}
\def\Id{\mathrm{Id}}
\def\mf{\mathfrak}
\def\shuffle{\,\raise 1pt\hbox{$\scriptscriptstyle\cup{\mskip
               -4mu}\cup$}\,}
\newcommand{\ii}{ \textbf{\textit{i}}}
\numberwithin{equation}{section}
\def\emph#1{{\sl #1\/}}
\let\tilde=\widetilde
\let\theta=\vartheta
\let\epsilon=\varepsilon
\def\Z{{\mathbbm Z}}
\def\Q{{\mathbbm Q}}
\def\cal#1{\mathcal{#1}}%
\def\1{\mathbbm{1}}%
\def\nn{\notag}
\def\la{\langle}
\def\ra{\rangle}
\newcommand{\ccbub}[1]{
\xybox{%
 (-6,0)*{};
  (6,0)*{};
  (-4,0)*{}="t1";
  (4,0)*{}="t2";
  "t2";"t1" **\crv{(4,6) & (-4,6)};
   ?(1)*\dir{>};
  "t2";"t1" **\crv{(4,-6) & (-4,-6)};
   ?(.3)*\dir{}+(0,0)*{\bullet}+(0,-3)*{\scs {#1}};
}}
\newcommand{\cbub}[1]{
\xybox{%
 (-6,0)*{};
  (6,0)*{};
  (-4,0)*{}="t1";
  (4,0)*{}="t2";
  "t2";"t1" **\crv{(4,6) & (-4,6)};
    ?(.95)*\dir{<};
  "t2";"t1" **\crv{(4,-6) & (-4,-6)};
   ?(.3)*\dir{}+(0,0)*{\bullet}+(0,-3)*{\scs {#1}};
}}
\newcommand{\bbe}[1]{\xybox{%
  (-2,0)*{};
  (2,0)*{};
  (0,0);(0,-18) **\dir{-}; ?(.5)*\dir{<}+(2.3,0)*{\scriptstyle{#1}};
}}
\newcommand{\bbpef}{\xybox{%
  (-6,0)*{};
  (6,0)*{};
  (-4,0)*{}="t1";
  (4,0)*{}="t2";
  "t1";"t2" **\crv{(-4,-6) & (4,-6)}; ?(.15)*\dir{>} ?(.9)*\dir{>};
}}
\newcommand{\bbpfe}{\xybox{%
  (-6,0)*{};
  (6,0)*{};
  (-4,0)*{}="t1";
  (4,0)*{}="t2";
  "t2";"t1" **\crv{(4,-6) & (-4,-6)}; ?(.15)*\dir{>} ?(.9)*\dir{>};
}}
\newcommand{\bbcfe}[1]{\xybox{%
  (-6,0)*{};
  (6,0)*{};
  (-4,0)*{}="t1";
  (4,0)*{}="t2";
  "t1";"t2" **\crv{(-4,6) & (4,6)}; ?(.15)*\dir{>} ?(.9)*\dir{>}
  ?(.5)*\dir{}+(0,2)*{\scriptstyle{#1}};
}}
\newcommand{\bbcef}[1]{\xybox{%
  (-6,0)*{};
  (6,0)*{};
  (-4,0)*{}="t1";
  (4,0)*{}="t2";
  "t2";"t1" **\crv{(4,6) & (-4,6)}; ?(.15)*\dir{>}
  ?(.9)*\dir{>} ?(.5)*\dir{}+(0,2)*{\scriptstyle{#1}};
}}
\newcommand{\lowrru}[1]{\xybox{%
  (-8,0)*{};
  (8,0)*{};
  (-6,-18)*{};(6,-9)*{} **\crv{(-6,-13) & (6,-15)} ?(1)*\dir{>};
  (6,-9)*{};(6,0)*{}  **\dir{-} ?(.3)*\dir{ }+(2,0)*{\scs {\bf j}};
}}
\newcommand{\lowllu}[1]{\xybox{%
  (-8,0)*{};
  (8,0)*{};
  (6,-18)*{};(-6,-9)*{} **\crv{(6,-13) & (-6,-15)} ?(1)*\dir{>};
  (-6,-9)*{};(-6,0)*{}  **\dir{-} ?(.3)*\dir{ }+(-2,0)*{\scs {\bf j}};
}}
\newcommand{\bbdl}[1]{\xybox{%
  (2,0);(0,-8) **\crv{(2,-2)&(0,-6)}; ?(.5)*\dir{>}
}}
\newcommand{\bbdlu}[1]{\xybox{%
  (2,0);(0,-8) **\crv{(2,-2)&(0,-6)}; ?(.5)*\dir{<}
}}
\newcommand{\bbdr}[1]{\xybox{%
  (-2,0);(0,-8) **\crv{(-2,-2)&(0,-6)}; ?(.5)*\dir{>}
}}
\newcommand{\bbdru}[1]{\xybox{%
  (-2,0);(0,-8) **\crv{(-2,-2)&(0,-6)}; ?(.5)*\dir{<}
}}
\def\cal#1{\mathcal{#1}}
\def \Z {\mathbbm{Z}}
\def \Q {\mathbbm{Q}}
\def \Tr{\operatorname{Tr}}
\def \Id {{\rm Id}}
\newcommand\nc{\newcommand}
\nc\rnc{\renewcommand}
\nc\Kar{\operatorname{Kar}}
\nc\End{\operatorname{End}}
\nc\modQ {{\mathbb Q}}
\nc\modZ {{\mathbb Z}}
\nc\simeqto{\overset{\simeq}{\longrightarrow }}
\nc\modC {{\mathcal C}}
\nc\modD {{\mathcal D}}
\nc\K{\mathcal {K}}
\nc\CC{\mathbf{C}}
\newcommand{\scs}{\scriptstyle}
\nc\calU{\mathcal{U}}
\nc\cU{\calU}
\nc\col{\colon\thinspace}
\nc\calA{\mathcal{A}}
\nc\Ab{\mathbf{Ab}}
\nc\Ko{K_0}
\nc\TrhorCC{\Tr^{\mathrm{hor}}(\CC)}
\nc\AdCat{\mathbf{AdCat}}
\nc\TrCC{\Tr(\CC)}
\nc\Udot{\dot{\mathcal{U}}}
\nc\diag{\mathrm{d}}
\nc\modU {\mathcal{U}}
\nc\bfU{\mathbf{U}}
\nc\dU{\dot{\mathbf U}}
\nc\dUZ{{_\modZ\dot{\mathbf U}}}
\nc\UZ{{_\modZ \mathbf U} }
\nc\fsl{\mathfrak{sl}}
\nc\Uaa{{\bf U} (\mathfrak{sl}_2\otimes \Q[t,t^{-1}])}
\nc\UZslt{{_\modZ\mathbf{U}} (\mathfrak{sl}_2\otimes \Q[t])}
\nc\UdZslt{{_\modZ\dot{\mathbf{U}}} (\mathfrak{sl}_2\otimes \Q[t])}
\nc\LL{L^+\fsl_2}
\nc\UL{\mathbf U(\LL)}
\nc\UZL{\UZ(\LL)}
\nc\dUZL{\dUZ(\LL)}
\nc\dUL{\dU(\LL)}
\nc{\im}{\rm im}
\nc\Kom{\rm Kom}
\nc\GL{\rm{GL}}
\nc\g{\mathfrak{g}}
\nc\tG{\tilde{G}} \nc\tE{\tilde{E}}
\nc\Vect{\rm Vect}
\nc{\Gras}{{\rm {Gr}}}
\nc\FMod{\rm FMod}
\nc\yto[1]{\underset{#1}{\to}}
\nc\Ear{\yto{E}}
\newcommand\sE{{\cal{E}}}
\newcommand\sF{{\cal{F}}}
\def\l{\lambda}
\newcommand{\iccbub}[2]{
\xybox{%
 (-6,0)*{};
  (6,0)*{};
  (-4,0)*{}="t1";
  (4,0)*{}="t2";
  "t2";"t1" **\crv{(4,6) & (-4,6)}; ?(.7)*\dir{}+(-2,0)*{\scs #2}
  ?(.05)*\dir{>} ?(1)*\dir{>};
  "t2";"t1" **\crv{(4,-6) & (-4,-6)};
   ?(.3)*\dir{}+(0,0)*{\bullet}+(0,-3)*{\scs {#1}};
}}
\newcommand{\icbub}[2]{
\xybox{%
 (-6,0)*{};
  (6,0)*{};
  (-4,0)*{}="t1";
  (4,0)*{}="t2";
  "t2";"t1" **\crv{(4,6) & (-4,6)};?(.7)*\dir{}+(-2,0)*{\scs #2};
   ?(0)*\dir{<} ?(.95)*\dir{<};
  "t2";"t1" **\crv{(4,-6) & (-4,-6)};
   ?(.3)*\dir{}+(0,0)*{\bullet}+(0,-3)*{\scs {#1}};
}}
\newcommand{\onel}{{\mathbf 1}_{\lambda}}
\begin{document}


\date{\today}

\title{Cyclicity for categorified quantum groups}

\author{Anna Beliakova}
\address{Universit\"at Z\"urich, Winterthurerstr. 190
CH-8057 Z\"urich, Switzerland}
\email{anna@math.uzh.ch}

\author{Kazuo Habiro}
\address{Research Institute for Mathematical Sciences, Kyoto University, Kyoto, 606-8502, Japan}
\email{habiro@kurims.kyoto-u.ac.jp}

\author{Aaron D.~Lauda}
\address{University of Southern California, Los Angeles, CA 90089, USA}
\email{lauda@usc.edu}

\author{Ben Webster}
\address{University of Virginia,
  Charlottesville, VA 22903, USA}
\email{bwebster@virginia.edu}

\begin{abstract}
We equip the categorified quantum group attached KLR algebra and an
arbitrary choice of scalars with duality functor which is cyclic, that
is, such that $f=f^{**}$
for all 2-morphisms $f$.  This is accomplished via a modified
diagrammatic formalism.
\end{abstract}

\maketitle


Consider the 2-category $\Ucat_Q(\mf{g})$ (as defined in
\cite{Brundan2,CLau,Rou2}) which categorifies a
Kac-Moody Lie algebra $\mf{g}$.  In this 2-category, every 1-morphism
possesses a left dual and a right dual.  Standard arguments show that
the left and right duals (which are isomorphic) are well-defined
up to unique isomorphism; however they are not well-defined as
1-morphisms.  That is, the operations of left and right dual are
anafunctors, not functors.  In particular, every 1-morphism is
isomorphic to its double-dual, but there is no fixed
isomorphism intrinsic to the 2-category structure.

However, we can
choose a right duality functor, which allows us to fix a left and
right dual.  In fact, this duality will be strict: any 1-morphism will be
equal to its double dual.   One such functor is supplied by the
functor $\tilde\tau$ defined in \cite[3.46]{KL3}, that is by rightward
rotation by $180^\circ$.  However, in the graphical calculus defined
in \cite{CLau}, this functor lacks one of the basic properties we
expect from a duality: while a 1-morphism is equal to its double dual,
for a 2-morphism $f\colon u\to v$, we will not necessarily have that $f$
and $f^{**}$ are equal.

If the equality $f=f^{**}$ does hold, we call the resulting duality
{\bf cyclic}.  In this case, the functor of double dual
is isomorphic to the identity; thus, the identity map defines a
{\bf (strictly)  pivotal} structure on this 2-category.  This property is also
equivalent to the induced biadjunction on the objects $(u,u^*)$ being
cyclic.  In diagrammatic terms, this means that a morphism is
unchanged by a full $360^\circ$ rotation.  It follows that
the string diagram calculus which uses a cyclic duality functor
enjoys a topological invariance that greatly simplifies computations.
Furthermore, a pivotal structure is necessary for defining convolution in the
Hochschild cohomology of a representation of this 2-category; in
particular, it plays a key role in the authors' previous work on
traces of categorified quantum groups, and related work of Shan,
Varagnolo and Vasserot~\cite{BHLW,SVV,Web6}.

Our work is motivated by work of Brundan~\cite{Brundan2}, which shows
that the 2-category introduced in \cite{CLau} can be defined as in
\cite{Rou2}, where the functor $\sF_i$ is the right dual of $\sE_i$ (by
definition), but there is no {\it a priori} connection between  $\sF_i$ and the left dual of $\sE_i$.  In order to define a duality functor, we must thus
choose an isomorphism of $\sE_i$ to the right dual of $\sF_i$ (that is, of
$\sF_i$ with the left dual of $\sE_i$).

In \cite[1.2]{Brundan2}, one
such isomorphism is defined, which matches the choice implicit in
\cite{CLau}.  Unfortunately, as \cite[(2.5)]{CLau} shows, this duality
is usually
not cyclic; in particular, it is not for a generic choice of parameters, or for
the choice which is most important in geometric and representation
theoretic applications, such as \cite{BK1,VV}.

In this paper, we introduce a 2-category
$\Ucat_Q^{cyc}(\mf{g})$ for an arbitrary choice of parameters $Q$ with
canonical cyclic duality, which is equivalent to
$\Ucat_Q(\mf{g})$ as a 2-category; of course, this equivalence is not
compatible with the duality functor.  That is, we give a diagrammatic framework for this
2-category where arbitrary isotopies leave 2-morphisms unchanged.
This differs from the similar calculus given in
\cite{CLau} in that
the ``degree zero bubbles"
are no longer identity 2-morphisms, but rather a multiple of the
identity.  By carefully choosing these parameters we are able to
modify the biadjunction making the 2-category $\Ucat_Q(\mf{g})$
pivotal.  Aside from this modification of bubble parameters, the
resulting 2-category $\Ucat_Q^{cyc}(\mf{g})$ has surprisingly simple
relations including a simplified version of the mixed
$\cal{E}_i\cal{F}_j$ relations (see \eqref{mixed_rel-cyc}) and nice
rotation properties for sideways crossings (see
\eqref{eq_crossl-gen-cyc} and \eqref{eq_crossr-gen-cyc}).
Furthermore, in the 2-category $\Ucat_Q(\mf{g})$, if the KLR algebra
$R_Q$ associated to a choice of scalars $Q$ governs the endomorphism
algebras of the upward oriented strings (endomorphisms in
$\Ucat_Q^{+}(\mf{g})$), then a different KLR algebra $R_{Q'}$ for a
related choice of scalars $Q'$ governs the downward oriented strings
(endomorphisms in $\Ucat_Q^{-}(\mf{g})$).  In our new pivotal
2-category $\Ucat_Q^{cyc}(\mf{g})$ the same KLR algebra governs the
upward and downward oriented graphical calculus.  This 2-category has some similar features as the 2-category defined for $\mf{gl}_n$ in \cite{MSV}.

In Section~\ref{sec:iso} we provide an explicit isomorphism between $\Ucat_Q^{cyc}(\mf{g})$ and the 2-category $\Ucat_Q(\mf{g})$ from \cite{CLau}.
This isomorphism is a straightforward scalar multiplication, so no
deep new techniques were needed in the construction of $\Ucat_Q^{cyc}(\mf{g})$.
However, it was only in the light of Brundan's rigidity theorem that this modification became apparent to the authors.

Our aim in this note is to provide the most convenient definition of categorified quantum groups to aid researchers in the field of higher representation theory.  To that end, we provide some useful relations in section~\ref{sec:relations} that can be derived in $\Ucat_Q^{cyc}(\mf{g})$.  It is clear that the 2-category $\Ucat_Q^{cyc}(\mf{g})$ introduced here is the most natural version of the categorified quantum group for a general choice of scalars $Q$.  Indeed, the 2-isomorphism defined in section~\ref{sec:iso} removes some of the inconvenient signs that have appeared in the study of various 2-representations~\cite{LQR,QR,MY}.

\medskip
{\bf Acknowledgments.}
The authors are grateful to Mike Abram and Laffite Lamberto-Egan for helpful comments on a preliminary version of this paper.   A.B. was  supported by Swiss National Science Foundation under Grant PDFMP2-141752/1.
K.H. was supported by JSPS KAKENHI Grant Numbers 24540077, 15K04873. A.D.L. was partially supported by NSF grant DMS-1255334, the John Templeton Foundation, and the SwissMAP NCCR grant of  Swiss National Science Foundation.  B.W was supported by the NSF under Grant DMS-1151473.

%
\section{The cyclic categorified quantum group}
\label{categorified-quantum}
%

%
\subsubsection{The Cartan datum and choice of scalars $Q$}\label{sec:datum}
%

We fix a Cartan datum consisting of
\begin{itemize}
\item a free $\Z$-module $X$ (the weight lattice),
\item for $i \in I$ ($I$ is an indexing set) there are elements $\alpha_i \in X$ (simple roots) and $\Lambda_i \in X$ (fundamental weights),
\item for $i \in I$ an element $h_i \in X^\vee = \Hom_{\Z}(X,\Z)$ (simple coroots),
\item a bilinear form $(\cdot,\cdot )$ on $X$.
\end{itemize}
Write $\langle \cdot, \cdot \rangle \maps X^{\vee} \times X
\to \Z$ for the canonical pairing. These data should satisfy:
\begin{itemize}
\item $(\alpha_i, \alpha_i) \in 2\Z_{>0}$ for any $i\in I$,
\item $\la i,\lambda\ra :=\langle h_i, \lambda \rangle = 2 \frac{(\alpha_i,\lambda)}{(\alpha_i,\alpha_i)}$
  for $i \in I$ and $\lambda \in X$,
\item $(\alpha_i,\alpha_j) \leq 0$  for $i,j\in I$ with $i \neq j$,
\item $\langle h_j, \Lambda_i \rangle =\delta_{ij}$ for all $i,j \in I$.
\end{itemize}
Hence $C_{i,j}=\{\langle h_i, \alpha_j \rangle\}_{i,j\in I}$ is a symmetrizable generalized Cartan matrix. In what follows we write
\begin{gather}
 d_{ij} =-\langle i, \alpha_j \rangle,\\
 d_i=\frac{(\alpha_i,\alpha_i)}{2}.
\end{gather}
for $i,j\in I$.

\medskip

Let $\Bbbk$ be a commutative ring with unit, and let $\Bbbk^{\times}$
denote the group of units in $\Bbbk$.

\begin{defn}
Associated to a Cartan datum we also fix a {\em choice of scalars $Q$} consisting of
\begin{itemize}
  \item $t_{ij}$ for all $i,j \in I$ ,
  \item $s_{ij}^{pq}\in \Bbbk$ for $i \neq j$,  $0 \leq p < d_{ij}$, and $0 \leq q < d_{ji}$,
\end{itemize}
such that
\begin{itemize}
\item $t_{ii}=1$ for all $i \in I$ and $t_{ij} \in \Bbbk^{\times}$ for $i\neq j$,
 \item $s_{ij}^{pq}=s_{ji}^{qp}$,
 \item $t_{ij}=t_{ji}$ when $d_{ij}=0$.
\end{itemize}
We set $s_{ij}^{pq}=0$ when $p,q<0$ or $d_{ij} \geq p$ or $d_{ji} \geq q$.
\end{defn}

\begin{defn}
A choice of {\em bubble parameters $C$} compatible with the choice
of scalars $Q$ is a set consisting of
\begin{itemize}
\item $c_{i,\l} \in \Bbbk^{\times}$  for $i\in I$ and $\lambda \in X$.
\end{itemize}
such that
\begin{equation}
  c_{i,\lambda+\alpha_j}/c_{i,\lambda}=t_{ij}
\end{equation}
for $i,j\in I$, $\lambda\in X$.
\end{defn}

Of course, we can choose such a set of scalars for any $t_{ij}$ by
fixing an arbitrary choice of $c_{i,\lambda}$ for a fixed $\lambda$
in each coset of the root lattice in the weight lattice, and then
extending to the rest of the coset using the compatibility condition.

For any choice of bubble parameters compatible with the choice of scalars $Q$ the values along an $\mf{sl}_2$-string remain constant since
\[
 c_{i,\l+\alpha_i} = t_{ii} c_{i,\l} = c_{i,\l},
\]
so that for all $n \in \Z$ we have $c_{i,\l} = c_{i,\l+n\alpha_i}$.

%
\subsection{Definition of the 2-category  $\Ucat_Q^{cyc}(\mf{g})$}
%

By a graded linear 2-category we mean a category enriched in graded linear categories, so that the hom spaces form graded linear categories, and the composition map is grading preserving.  Given a fixed choice of scalars $Q$ and a compatible choice of bubble parameters $C$ we can define the following 2-category.

\begin{defn} \label{defU_cat-cyc}
The 2-category $\Ucat_Q^{cyc}(\mf{g})$ is the graded linear 2-category consisting of:
\begin{itemize}
\item objects $\lambda$ for $\lambda \in X$.
\item 1-morphisms are formal direct sums of (shifts of) compositions of
$$\onel, \quad \onenn{\l+\alpha_i} \sE_i = \onenn{\l+\alpha_i} \sE_i\onel = \sE_i \onel, \quad \text{ and }\quad \onenn{\lambda-\alpha_i} \sF_i = \onenn{\lambda-\alpha_i} \sF_i\onel = \sF_i\onel$$
for $i \in I$ and $\l \in X$.
In particular, any morphism can be written as a finite formal sum of symbols $\sE_{\ii}\onel\la t\ra$ where $\ii = (\pm i_1, \dots, \pm i_m)$ is a signed sequence of simple roots, $t$ is a grading shift,  $\sE_{+i}\onel := \sE_i\onel$ and $\sE_{-i}\onel:=\sF_i\onel$, and $\sE_{\ii}\onel\la t\ra := \sE_{\pm i_1} \dots \sE_{\pm i_m}\onel \la t\ra$.

\item 2-morphisms are $\Bbbk$-modules spanned by compositions of (decorated) tangle-like diagrams illustrated below.

\begin{align}
  \xy 0;/r.17pc/:
 (0,7);(0,-7); **\dir{-} ?(.75)*\dir{>};
 (0,0)*{\bullet};
 (7,3)*{ \scs \lambda};
 (-9,3)*{\scs  \lambda+\alpha_i};
 (-2.5,-6)*{\scs i};
 (-10,0)*{};(10,0)*{};
 \endxy &\maps \cal{E}_i\onel \to \cal{E}_i\onel\la (\alpha_i,\alpha_i) \ra  & \quad
 &
    \xy 0;/r.17pc/:
 (0,7);(0,-7); **\dir{-} ?(.75)*\dir{<};
 (0,0)*{\bullet};
 (7,3)*{ \scs \lambda};
 (-9,3)*{\scs  \lambda-\alpha_i};
 (-2.5,-6)*{\scs i};
 (-10,0)*{};(10,0)*{};
 \endxy\maps \cal{F}_i\onel \to \cal{F}_i\onel\la (\alpha_i,\alpha_i) \ra  \nn \\
   & & & \nn \\
   \xy 0;/r.17pc/:
  (0,0)*{\xybox{
    (-4,-4)*{};(4,4)*{} **\crv{(-4,-1) & (4,1)}?(1)*\dir{>} ;
    (4,-4)*{};(-4,4)*{} **\crv{(4,-1) & (-4,1)}?(1)*\dir{>};
    (-5.5,-3)*{\scs i};
     (5.5,-3)*{\scs j};
     (9,1)*{\scs  \lambda};
     (-10,0)*{};(10,0)*{};
     }};
  \endxy \;\;&\maps \cal{E}_i\cal{E}_j\onel  \to \cal{E}_j\cal{E}_i\onel\la - (\alpha_i,\alpha_j) \ra  &
  &
   \xy 0;/r.17pc/:
  (0,0)*{\xybox{
    (-4,4)*{};(4,-4)*{} **\crv{(-4,1) & (4,-1)}?(1)*\dir{>} ;
    (4,4)*{};(-4,-4)*{} **\crv{(4,1) & (-4,-1)}?(1)*\dir{>};
    (-6.5,-3)*{\scs i};
     (6.5,-3)*{\scs j};
     (9,1)*{\scs  \lambda};
     (-10,0)*{};(10,0)*{};
     }};
  \endxy\;\; \maps \cal{F}_i\cal{F}_j\onel  \to \cal{F}_j\cal{F}_i\onel\la - (\alpha_i,\alpha_j) \ra  \nn \\
     & & & \nn \\
   \xy 0;/r.17pc/:
  (0,0)*{\xybox{
    (-4,-4)*{};(4,4)*{} **\crv{(-4,-1) & (4,1)}?(0)*\dir{<} ;
    (4,-4)*{};(-4,4)*{} **\crv{(4,-1) & (-4,1)}?(1)*\dir{>};
    (-5.5,-3)*{\scs i};
     (5.5,-3)*{\scs j};
     (9,1)*{\scs  \lambda};
     (-10,0)*{};(10,0)*{};
     }};
  \endxy \;\;&\maps \cal{F}_i\cal{E}_j\onel  \to \cal{E}_j\cal{F}_i\onel   &
  &
   \xy 0;/r.17pc/:
  (0,0)*{\xybox{
    (-4,4)*{};(4,-4)*{} **\crv{(-4,1) & (4,-1)}?(1)*\dir{>} ;
    (4,4)*{};(-4,-4)*{} **\crv{(4,1) & (-4,-1)}?(0)*\dir{<};
    (-6.5,-3)*{\scs i};
     (6.5,-3)*{\scs j};
     (9,1)*{\scs  \lambda};
     (-10,0)*{};(10,0)*{};
     }};
  \endxy\;\; \maps \cal{E}_i\cal{F}_j\onel  \to \cal{F}_j\cal{E}_i\onel   \nn \\
  & & & \nn \\
     \xy 0;/r.17pc/:
    (0,-3)*{\bbpef{i}};
    (8,-5)*{\scs  \lambda};
    (-10,0)*{};(10,0)*{};
    \endxy &\maps \onel  \to \cal{F}_i\cal{E}_i\onel\la d_i + (\l, \alpha_i) \ra   &
    &
   \xy 0;/r.17pc/:
    (0,-3)*{\bbpfe{i}};
    (8,-5)*{\scs \lambda};
    (-10,0)*{};(10,0)*{};
    \endxy \maps \onel  \to\cal{E}_i\cal{F}_i\onel\la d_i - (\l, \alpha_i) \ra  \nn \\
      & & & \nn \\
  \xy 0;/r.17pc/:
    (0,0)*{\bbcef{i}};
    (8,4)*{\scs  \lambda};
    (-10,0)*{};(10,0)*{};
    \endxy & \maps \cal{F}_i\cal{E}_i\onel \to\onel\la d_i + (\l, \alpha_i) \ra  &
    &
 \xy 0;/r.17pc/:
    (0,0)*{\bbcfe{i}};
    (8,4)*{\scs  \lambda};
    (-10,0)*{};(10,0)*{};
    \endxy \maps\cal{E}_i\cal{F}_i\onel  \to\onel\la d_i - (\l, \alpha_i) \ra \nn
\end{align}
\end{itemize}
\end{defn}
In this $2$-category (and those throughout the paper) we
read diagrams from right to left and bottom to top.  The identity 2-morphism of the 1-morphism
$\cal{E}_i \onel$ is
represented by an upward oriented line labeled by $i$ and the identity 2-morphism of $\cal{F}_i \onel$ is
represented by a downward such line.

The 2-morphisms satisfy the following relations:
\begin{enumerate}
\item \label{item_cycbiadjoint-cyc} The 1-morphisms $\cal{E}_i \onel$ and $\cal{F}_i \onel$ are biadjoint (up to a specified degree shift). These conditions are expressed diagrammatically as
    \begin{equation} \label{eq_biadjoint1-cyc}
 \xy   0;/r.17pc/:
    (-8,0)*{}="1";
    (0,0)*{}="2";
    (8,0)*{}="3";
    (-8,-10);"1" **\dir{-};
    "1";"2" **\crv{(-8,8) & (0,8)} ?(0)*\dir{>} ?(1)*\dir{>};
    "2";"3" **\crv{(0,-8) & (8,-8)}?(1)*\dir{>};
    "3"; (8,10) **\dir{-};
    (12,-9)*{\lambda};
    (-6,9)*{\lambda+\alpha_i};
    \endxy
    \; =
    \;
\xy   0;/r.17pc/:
    (-8,0)*{}="1";
    (0,0)*{}="2";
    (8,0)*{}="3";
    (0,-10);(0,10)**\dir{-} ?(.5)*\dir{>};
    (5,8)*{\lambda};
    (-9,8)*{\lambda+\alpha_i};
    \endxy
\qquad \quad \xy  0;/r.17pc/:
    (8,0)*{}="1";
    (0,0)*{}="2";
    (-8,0)*{}="3";
    (8,-10);"1" **\dir{-};
    "1";"2" **\crv{(8,8) & (0,8)} ?(0)*\dir{<} ?(1)*\dir{<};
    "2";"3" **\crv{(0,-8) & (-8,-8)}?(1)*\dir{<};
    "3"; (-8,10) **\dir{-};
    (12,9)*{\lambda+\alpha_i};
    (-6,-9)*{\lambda};
    \endxy
    \; =
    \;
\xy  0;/r.17pc/:
    (8,0)*{}="1";
    (0,0)*{}="2";
    (-8,0)*{}="3";
    (0,-10);(0,10)**\dir{-} ?(.5)*\dir{<};
    (9,-8)*{\lambda+\alpha_i};
    (-6,-8)*{\lambda};
    \endxy
\end{equation}

\begin{equation}\label{eq_biadjoint2-cyc}
 \xy   0;/r.17pc/:
    (8,0)*{}="1";
    (0,0)*{}="2";
    (-8,0)*{}="3";
    (8,-10);"1" **\dir{-};
    "1";"2" **\crv{(8,8) & (0,8)} ?(0)*\dir{>} ?(1)*\dir{>};
    "2";"3" **\crv{(0,-8) & (-8,-8)}?(1)*\dir{>};
    "3"; (-8,10) **\dir{-};
    (12,9)*{\lambda};
    (-5,-9)*{\lambda+\alpha_i};
    \endxy
    \; =
    \;
    \xy 0;/r.17pc/:
    (8,0)*{}="1";
    (0,0)*{}="2";
    (-8,0)*{}="3";
    (0,-10);(0,10)**\dir{-} ?(.5)*\dir{>};
    (5,-8)*{\lambda};
    (-9,-8)*{\lambda+\alpha_i};
    \endxy
\qquad \quad \xy   0;/r.17pc/:
    (-8,0)*{}="1";
    (0,0)*{}="2";
    (8,0)*{}="3";
    (-8,-10);"1" **\dir{-};
    "1";"2" **\crv{(-8,8) & (0,8)} ?(0)*\dir{<} ?(1)*\dir{<};
    "2";"3" **\crv{(0,-8) & (8,-8)}?(1)*\dir{<};
    "3"; (8,10) **\dir{-};
    (12,-9)*{\lambda+\alpha_i};
    (-6,9)*{\lambda};
    \endxy
    \; =
    \;
\xy   0;/r.17pc/:
    (-8,0)*{}="1";
    (0,0)*{}="2";
    (8,0)*{}="3";
    (0,-10);(0,10)**\dir{-} ?(.5)*\dir{<};
   (9,8)*{\lambda+\alpha_i};
    (-6,8)*{\lambda};
    \endxy
\end{equation}

  \item The 2-morphisms are cyclic with respect to this biadjoint structure.
\begin{equation} \label{eq_cyclic_dot-cyc}
 \xy 0;/r.17pc/:
    (-8,5)*{}="1";
    (0,5)*{}="2";
    (0,-5)*{}="2'";
    (8,-5)*{}="3";
    (-8,-10);"1" **\dir{-};
    "2";"2'" **\dir{-} ?(.5)*\dir{<};
    "1";"2" **\crv{(-8,12) & (0,12)} ?(0)*\dir{<};
    "2'";"3" **\crv{(0,-12) & (8,-12)}?(1)*\dir{<};
    "3"; (8,10) **\dir{-};
    (17,-9)*{\lambda+\alpha_i};
    (-12,9)*{\lambda};
    (0,4)*{\bullet};
    (10,8)*{\scs };
    (-10,-8)*{\scs };
    \endxy
    \quad = \quad
      \xy 0;/r.17pc/:
 (0,10);(0,-10); **\dir{-} ?(.75)*\dir{<}+(2.3,0)*{\scriptstyle{}}
 ?(.1)*\dir{ }+(2,0)*{\scs };
 (0,0)*{\bullet};
 (-6,5)*{\lambda};
 (10,5)*{\lambda+\alpha_i};
 (-10,0)*{};(10,0)*{};(-2,-8)*{\scs };
 \endxy
    \quad = \quad
   \xy 0;/r.17pc/:
    (8,5)*{}="1";
    (0,5)*{}="2";
    (0,-5)*{}="2'";
    (-8,-5)*{}="3";
    (8,-10);"1" **\dir{-};
    "2";"2'" **\dir{-} ?(.5)*\dir{<};
    "1";"2" **\crv{(8,12) & (0,12)} ?(0)*\dir{<};
    "2'";"3" **\crv{(0,-12) & (-8,-12)}?(1)*\dir{<};
    "3"; (-8,10) **\dir{-};
    (17,9)*{\lambda+\alpha_i};
    (-12,-9)*{\lambda};
    (0,4)*{\bullet};
    (-10,8)*{\scs };
    (10,-8)*{\scs };
    \endxy
\end{equation}
The cyclic relations for crossings are given by
\begin{equation} \label{eq_cyclic}
   \xy 0;/r.17pc/:
  (0,0)*{\xybox{
    (-4,4)*{};(4,-4)*{} **\crv{(-4,1) & (4,-1)}?(1)*\dir{>} ;
    (4,4)*{};(-4,-4)*{} **\crv{(4,1) & (-4,-1)}?(1)*\dir{>};
    (-6.5,-3)*{\scs i};
     (6.5,-3)*{\scs j};
     (9,1)*{\scs  \lambda};
     (-10,0)*{};(10,0)*{};
     }};
  \endxy \quad = \quad
  \xy 0;/r.17pc/:
  (0,0)*{\xybox{
    (4,-4)*{};(-4,4)*{} **\crv{(4,-1) & (-4,1)}?(1)*\dir{>};
    (-4,-4)*{};(4,4)*{} **\crv{(-4,-1) & (4,1)};
     (-4,4)*{};(18,4)*{} **\crv{(-4,16) & (18,16)} ?(1)*\dir{>};
     (4,-4)*{};(-18,-4)*{} **\crv{(4,-16) & (-18,-16)} ?(1)*\dir{<}?(0)*\dir{<};
     (-18,-4);(-18,12) **\dir{-};(-12,-4);(-12,12) **\dir{-};
     (18,4);(18,-12) **\dir{-};(12,4);(12,-12) **\dir{-};
     (22,1)*{ \lambda};
     (-10,0)*{};(10,0)*{};
     (-4,-4)*{};(-12,-4)*{} **\crv{(-4,-10) & (-12,-10)}?(1)*\dir{<}?(0)*\dir{<};
      (4,4)*{};(12,4)*{} **\crv{(4,10) & (12,10)}?(1)*\dir{>}?(0)*\dir{>};
      (-20,11)*{\scs j};(-10,11)*{\scs i};
      (20,-11)*{\scs j};(10,-11)*{\scs i};
     }};
  \endxy
\quad =  \quad
\xy 0;/r.17pc/:
  (0,0)*{\xybox{
    (-4,-4)*{};(4,4)*{} **\crv{(-4,-1) & (4,1)}?(1)*\dir{>};
    (4,-4)*{};(-4,4)*{} **\crv{(4,-1) & (-4,1)};
     (4,4)*{};(-18,4)*{} **\crv{(4,16) & (-18,16)} ?(1)*\dir{>};
     (-4,-4)*{};(18,-4)*{} **\crv{(-4,-16) & (18,-16)} ?(1)*\dir{<}?(0)*\dir{<};
     (18,-4);(18,12) **\dir{-};(12,-4);(12,12) **\dir{-};
     (-18,4);(-18,-12) **\dir{-};(-12,4);(-12,-12) **\dir{-};
     (22,1)*{ \lambda};
     (-10,0)*{};(10,0)*{};
      (4,-4)*{};(12,-4)*{} **\crv{(4,-10) & (12,-10)}?(1)*\dir{<}?(0)*\dir{<};
      (-4,4)*{};(-12,4)*{} **\crv{(-4,10) & (-12,10)}?(1)*\dir{>}?(0)*\dir{>};
      (20,11)*{\scs i};(10,11)*{\scs j};
      (-20,-11)*{\scs i};(-10,-11)*{\scs j};
     }};
  \endxy
\end{equation}

Sideways crossings satisfy the following identities:
\begin{equation} \label{eq_crossl-gen-cyc}
  \xy 0;/r.18pc/:
  (0,0)*{\xybox{
    (-4,-4)*{};(4,4)*{} **\crv{(-4,-1) & (4,1)}?(1)*\dir{>} ;
    (4,-4)*{};(-4,4)*{} **\crv{(4,-1) & (-4,1)}?(0)*\dir{<};
    (-5,-3)*{\scs j};
     (6.5,-3)*{\scs i};
     (9,2)*{ \lambda};
     (-12,0)*{};(12,0)*{};
     }};
  \endxy
\quad = \quad
 \xy 0;/r.17pc/:
  (0,0)*{\xybox{
    (4,-4)*{};(-4,4)*{} **\crv{(4,-1) & (-4,1)}?(1)*\dir{>};
    (-4,-4)*{};(4,4)*{} **\crv{(-4,-1) & (4,1)};
     (-4,4);(-4,12) **\dir{-};
     (-12,-4);(-12,12) **\dir{-};
     (4,-4);(4,-12) **\dir{-};(12,4);(12,-12) **\dir{-};
     (16,1)*{\lambda};
     (-10,0)*{};(10,0)*{};
     (-4,-4)*{};(-12,-4)*{} **\crv{(-4,-10) & (-12,-10)}?(1)*\dir{<}?(0)*\dir{<};
      (4,4)*{};(12,4)*{} **\crv{(4,10) & (12,10)}?(1)*\dir{>}?(0)*\dir{>};
      (-14,11)*{\scs i};(-2,11)*{\scs j};
      (14,-11)*{\scs i};(2,-11)*{\scs j};
     }};
  \endxy
  \quad = \quad
 \xy 0;/r.17pc/:
  (0,0)*{\xybox{
    (-4,-4)*{};(4,4)*{} **\crv{(-4,-1) & (4,1)}?(1)*\dir{<};
    (4,-4)*{};(-4,4)*{} **\crv{(4,-1) & (-4,1)};
     (4,4);(4,12) **\dir{-};
     (12,-4);(12,12) **\dir{-};
     (-4,-4);(-4,-12) **\dir{-};(-12,4);(-12,-12) **\dir{-};
     (16,1)*{\lambda};
     (10,0)*{};(-10,0)*{};
     (4,-4)*{};(12,-4)*{} **\crv{(4,-10) & (12,-10)}?(1)*\dir{>}?(0)*\dir{>};
      (-4,4)*{};(-12,4)*{} **\crv{(-4,10) & (-12,10)}?(1)*\dir{<}?(0)*\dir{<};
     }};
     (12,11)*{\scs j};(0,11)*{\scs i};
      (-17,-11)*{\scs j};(-5,-11)*{\scs i};
  \endxy
\end{equation}
\begin{equation} \label{eq_crossr-gen-cyc}
  \xy 0;/r.18pc/:
  (0,0)*{\xybox{
    (-4,-4)*{};(4,4)*{} **\crv{(-4,-1) & (4,1)}?(0)*\dir{<} ;
    (4,-4)*{};(-4,4)*{} **\crv{(4,-1) & (-4,1)}?(1)*\dir{>};
    (5.1,-3)*{\scs i};
     (-6.5,-3)*{\scs j};
     (9,2)*{ \lambda};
     (-12,0)*{};(12,0)*{};
     }};
  \endxy
\quad = \quad
 \xy 0;/r.17pc/:
  (0,0)*{\xybox{
    (-4,-4)*{};(4,4)*{} **\crv{(-4,-1) & (4,1)}?(1)*\dir{>};
    (4,-4)*{};(-4,4)*{} **\crv{(4,-1) & (-4,1)};
     (4,4);(4,12) **\dir{-};
     (12,-4);(12,12) **\dir{-};
     (-4,-4);(-4,-12) **\dir{-};(-12,4);(-12,-12) **\dir{-};
     (16,-6)*{\lambda};
     (10,0)*{};(-10,0)*{};
     (4,-4)*{};(12,-4)*{} **\crv{(4,-10) & (12,-10)}?(1)*\dir{<}?(0)*\dir{<};
      (-4,4)*{};(-12,4)*{} **\crv{(-4,10) & (-12,10)}?(1)*\dir{>}?(0)*\dir{>};
      (14,11)*{\scs j};(2,11)*{\scs i};
      (-14,-11)*{\scs j};(-2,-11)*{\scs i};
     }};
  \endxy
  \quad = \quad
  \xy 0;/r.17pc/:
  (0,0)*{\xybox{
    (4,-4)*{};(-4,4)*{} **\crv{(4,-1) & (-4,1)}?(1)*\dir{<};
    (-4,-4)*{};(4,4)*{} **\crv{(-4,-1) & (4,1)};
     (-4,4);(-4,12) **\dir{-};
     (-12,-4);(-12,12) **\dir{-};
     (4,-4);(4,-12) **\dir{-};(12,4);(12,-12) **\dir{-};
     (16,6)*{\lambda};
     (-10,0)*{};(10,0)*{};
     (-4,-4)*{};(-12,-4)*{} **\crv{(-4,-10) & (-12,-10)}?(1)*\dir{>}?(0)*\dir{>};
      (4,4)*{};(12,4)*{} **\crv{(4,10) & (12,10)}?(1)*\dir{<}?(0)*\dir{<};
      (-14,11)*{\scs i};(-2,11)*{\scs j};(14,-11)*{\scs i};(2,-11)*{\scs j};
     }};
  \endxy
\end{equation}

\item The $\cal{E}$'s (respectively $\cal{F}$'s) carry an action of the KLR algebra associated to $Q$. The KLR algebra $R=R_Q$ associated to $Q$ is defined by finite $\Bbbk$-linear combinations of braid--like diagrams in the plane, where each strand is labeled by a vertex $i \in I$.  Strands can intersect and can carry dots but triple intersections are not allowed.  Diagrams are considered up to planar isotopy that do not change the combinatorial type of the diagram. We recall the local relations:
\begin{enumerate}[i)]

\item For $i \neq j$
\begin{equation}
 \vcenter{\xy 0;/r.17pc/:
    (-4,-4)*{};(4,4)*{} **\crv{(-4,-1) & (4,1)}?(1)*\dir{};
    (4,-4)*{};(-4,4)*{} **\crv{(4,-1) & (-4,1)}?(1)*\dir{};
    (-4,4)*{};(4,12)*{} **\crv{(-4,7) & (4,9)}?(1)*\dir{};
    (4,4)*{};(-4,12)*{} **\crv{(4,7) & (-4,9)}?(1)*\dir{};
    (8,8)*{\lambda};
    (4,12); (4,13) **\dir{-}?(1)*\dir{>};
    (-4,12); (-4,13) **\dir{-}?(1)*\dir{>};
  (-5.5,-3)*{\scs i};
     (5.5,-3)*{\scs j};
 \endxy}
 \qquad = \qquad
 \left\{
 \begin{array}{ccc}
 0 & & \text{if $(\alpha_i,\alpha_j)=2$, } \\ \\
     t_{ij}\;\xy 0;/r.17pc/:
  (3,9);(3,-9) **\dir{-}?(0)*\dir{<}+(2.3,0)*{};
  (-3,9);(-3,-9) **\dir{-}?(0)*\dir{<}+(2.3,0)*{};
  (-5,-6)*{\scs i};     (5.1,-6)*{\scs j};
 \endxy &  &  \text{if $(\alpha_i, \alpha_j)=0$,}\\ \\
  t_{ij} \vcenter{\xy 0;/r.17pc/:
  (3,9);(3,-9) **\dir{-}?(.5)*\dir{<}+(2.3,0)*{};
  (-3,9);(-3,-9) **\dir{-}?(.5)*\dir{<}+(2.3,0)*{};
  (-3,4)*{\bullet};(-6.5,5)*{\scs d_{ij}};
  (-5,-6)*{\scs i};     (5.1,-6)*{\scs j};
 \endxy} \;\; + \;\; t_{ji}
  \vcenter{\xy 0;/r.17pc/:
  (3,9);(3,-9) **\dir{-}?(.5)*\dir{<}+(2.3,0)*{};
  (-3,9);(-3,-9) **\dir{-}?(.5)*\dir{<}+(2.3,0)*{};
  (3,4)*{\bullet};(7,5)*{\scs d_{ji}};
  (-5,-6)*{\scs i};     (5.1,-6)*{\scs j};
 \endxy}
 \;\; + \;\;
 \xsum{p,q}{} s_{ij}^{pq}\;
 \vcenter{\xy 0;/r.17pc/:
  (3,9);(3,-9) **\dir{-}?(.5)*\dir{<}+(2.3,0)*{};
  (-3,9);(-3,-9) **\dir{-}?(.5)*\dir{<}+(2.3,0)*{};
  (-3,4)*{\bullet};(-5.5,5)*{\scs p};
  (3,4)*{\bullet};(6,5)*{\scs q};
  (-5,-6)*{\scs i};     (5.1,-6)*{\scs j};
 \endxy}
   &  & \text{if $(\alpha_i, \alpha_j) <0$,}
 \end{array}
 \right. \label{eq_r2_ij-gen-cyc}
\end{equation}

\item The nilHecke dot sliding relations
\begin{equation} \label{eq_dot_slide_ii-gen-cyc}
\xy 0;/r.18pc/:
  (0,0)*{\xybox{
    (-4,-4)*{};(4,6)*{} **\crv{(-4,-1) & (4,1)}?(1)*\dir{>}?(.25)*{\bullet};
    (4,-4)*{};(-4,6)*{} **\crv{(4,-1) & (-4,1)}?(1)*\dir{>};
    (-5,-3)*{\scs i};
     (5.1,-3)*{\scs i};
     (-10,0)*{};(10,0)*{};
     }};
  \endxy
 \;\; -
\xy 0;/r.18pc/:
  (0,0)*{\xybox{
    (-4,-4)*{};(4,6)*{} **\crv{(-4,-1) & (4,1)}?(1)*\dir{>}?(.75)*{\bullet};
    (4,-4)*{};(-4,6)*{} **\crv{(4,-1) & (-4,1)}?(1)*\dir{>};
    (-5,-3)*{\scs i};
     (5.1,-3)*{\scs i};
     (-10,0)*{};(10,0)*{};
     }};
  \endxy
\;\; =  \xy 0;/r.18pc/:
  (0,0)*{\xybox{
    (-4,-4)*{};(4,6)*{} **\crv{(-4,-1) & (4,1)}?(1)*\dir{>};
    (4,-4)*{};(-4,6)*{} **\crv{(4,-1) & (-4,1)}?(1)*\dir{>}?(.75)*{\bullet};
    (-5,-3)*{\scs i};
     (5.1,-3)*{\scs i};
     (-10,0)*{};(10,0)*{};
     }};
  \endxy
\;\;  -
  \xy 0;/r.18pc/:
  (0,0)*{\xybox{
    (-4,-4)*{};(4,6)*{} **\crv{(-4,-1) & (4,1)}?(1)*\dir{>} ;
    (4,-4)*{};(-4,6)*{} **\crv{(4,-1) & (-4,1)}?(1)*\dir{>}?(.25)*{\bullet};
    (-5,-3)*{\scs i};
     (5.1,-3)*{\scs i};
     (-10,0)*{};(12,0)*{};
     }};
  \endxy
  \;\; = \;\;
    \xy 0;/r.18pc/:
  (0,0)*{\xybox{
    (-4,-4)*{};(-4,6)*{} **\dir{-}?(1)*\dir{>} ;
    (4,-4)*{};(4,6)*{} **\dir{-}?(1)*\dir{>};
    (-5,-3)*{\scs i};
     (5.1,-3)*{\scs i};
     (-10,0)*{};(12,0)*{};
     }};
  \endxy
\end{equation}
hold.

\item For $i \neq j$ the dot sliding relations
\begin{equation} \label{eq_dot_slide_ij-gen-cyc}
\xy 0;/r.18pc/:
  (0,0)*{\xybox{
    (-4,-4)*{};(4,6)*{} **\crv{(-4,-1) & (4,1)}?(1)*\dir{>}?(.75)*{\bullet};
    (4,-4)*{};(-4,6)*{} **\crv{(4,-1) & (-4,1)}?(1)*\dir{>};
    (-5,-3)*{\scs i};
     (5.1,-3)*{\scs j};
     (-10,0)*{};(10,0)*{};
     }};
  \endxy
 \;\; =
\xy 0;/r.18pc/:
  (0,0)*{\xybox{
    (-4,-4)*{};(4,6)*{} **\crv{(-4,-1) & (4,1)}?(1)*\dir{>}?(.25)*{\bullet};
    (4,-4)*{};(-4,6)*{} **\crv{(4,-1) & (-4,1)}?(1)*\dir{>};
    (-5,-3)*{\scs i};
     (5.1,-3)*{\scs j};
     (-10,0)*{};(10,0)*{};
     }};
  \endxy
\qquad  \xy 0;/r.18pc/:
  (0,0)*{\xybox{
    (-4,-4)*{};(4,6)*{} **\crv{(-4,-1) & (4,1)}?(1)*\dir{>};
    (4,-4)*{};(-4,6)*{} **\crv{(4,-1) & (-4,1)}?(1)*\dir{>}?(.75)*{\bullet};
    (-5,-3)*{\scs i};
     (5.1,-3)*{\scs j};
     (-10,0)*{};(10,0)*{};
     }};
  \endxy
\;\;  =
  \xy 0;/r.18pc/:
  (0,0)*{\xybox{
    (-4,-4)*{};(4,6)*{} **\crv{(-4,-1) & (4,1)}?(1)*\dir{>} ;
    (4,-4)*{};(-4,6)*{} **\crv{(4,-1) & (-4,1)}?(1)*\dir{>}?(.25)*{\bullet};
    (-5,-3)*{\scs i};
     (5.1,-3)*{\scs j};
     (-10,0)*{};(12,0)*{};
     }};
  \endxy
\end{equation}
hold.

\item Unless $i = k$ and $(\alpha_i, \alpha_j) < 0$ the relation
\begin{equation}
\vcenter{\xy 0;/r.17pc/:
    (-4,-4)*{};(4,4)*{} **\crv{(-4,-1) & (4,1)}?(1)*\dir{};
    (4,-4)*{};(-4,4)*{} **\crv{(4,-1) & (-4,1)}?(1)*\dir{};
    (4,4)*{};(12,12)*{} **\crv{(4,7) & (12,9)}?(1)*\dir{};
    (12,4)*{};(4,12)*{} **\crv{(12,7) & (4,9)}?(1)*\dir{};
    (-4,12)*{};(4,20)*{} **\crv{(-4,15) & (4,17)}?(1)*\dir{};
    (4,12)*{};(-4,20)*{} **\crv{(4,15) & (-4,17)}?(1)*\dir{};
    (-4,4)*{}; (-4,12) **\dir{-};
    (12,-4)*{}; (12,4) **\dir{-};
    (12,12)*{}; (12,20) **\dir{-};
    (4,20); (4,21) **\dir{-}?(1)*\dir{>};
    (-4,20); (-4,21) **\dir{-}?(1)*\dir{>};
    (12,20); (12,21) **\dir{-}?(1)*\dir{>};
   (18,8)*{\lambda};  (-6,-3)*{\scs i};
  (6,-3)*{\scs j};
  (15,-3)*{\scs k};
\endxy}
 \;\; =\;\;
\vcenter{\xy 0;/r.17pc/:
    (4,-4)*{};(-4,4)*{} **\crv{(4,-1) & (-4,1)}?(1)*\dir{};
    (-4,-4)*{};(4,4)*{} **\crv{(-4,-1) & (4,1)}?(1)*\dir{};
    (-4,4)*{};(-12,12)*{} **\crv{(-4,7) & (-12,9)}?(1)*\dir{};
    (-12,4)*{};(-4,12)*{} **\crv{(-12,7) & (-4,9)}?(1)*\dir{};
    (4,12)*{};(-4,20)*{} **\crv{(4,15) & (-4,17)}?(1)*\dir{};
    (-4,12)*{};(4,20)*{} **\crv{(-4,15) & (4,17)}?(1)*\dir{};
    (4,4)*{}; (4,12) **\dir{-};
    (-12,-4)*{}; (-12,4) **\dir{-};
    (-12,12)*{}; (-12,20) **\dir{-};
    (4,20); (4,21) **\dir{-}?(1)*\dir{>};
    (-4,20); (-4,21) **\dir{-}?(1)*\dir{>};
    (-12,20); (-12,21) **\dir{-}?(1)*\dir{>};
  (10,8)*{\lambda};
  (-14,-3)*{\scs i};
  (-6,-3)*{\scs j};
  (6,-3)*{\scs k};
\endxy}
 \label{eq_r3_easy-gen-cyc}
\end{equation}
holds. Otherwise, $(\alpha_i, \alpha_j) < 0$ and
\begin{equation}
\;\; \vcenter{
 \xy 0;/r.17pc/:
    (-4,-4)*{};(4,4)*{} **\crv{(-4,-1) & (4,1)}?(1)*\dir{};
    (4,-4)*{};(-4,4)*{} **\crv{(4,-1) & (-4,1)}?(1)*\dir{};
    (4,4)*{};(12,12)*{} **\crv{(4,7) & (12,9)}?(1)*\dir{};
    (12,4)*{};(4,12)*{} **\crv{(12,7) & (4,9)}?(1)*\dir{};
    (-4,12)*{};(4,20)*{} **\crv{(-4,15) & (4,17)}?(1)*\dir{>};
    (4,12)*{};(-4,20)*{} **\crv{(4,15) & (-4,17)}?(1)*\dir{>};
    (-4,4)*{}; (-4,12) **\dir{-};
    (12,-4)*{}; (12,4) **\dir{-};
    (12,12)*{}; (12,20) **\dir{-}?(1)*\dir{>};
  (-6,-3)*{\scs i};
  (6,-3)*{\scs j};
  (14,-3)*{\scs i};
\endxy}
\quad - \quad
 \vcenter{
 \xy 0;/r.17pc/:
    (4,-4)*{};(-4,4)*{} **\crv{(4,-1) & (-4,1)}?(1)*\dir{};
    (-4,-4)*{};(4,4)*{} **\crv{(-4,-1) & (4,1)}?(1)*\dir{};
    (-4,4)*{};(-12,12)*{} **\crv{(-4,7) & (-12,9)}?(1)*\dir{};
    (-12,4)*{};(-4,12)*{} **\crv{(-12,7) & (-4,9)}?(1)*\dir{};
    (4,12)*{};(-4,20)*{} **\crv{(4,15) & (-4,17)}?(1)*\dir{>};
    (-4,12)*{};(4,20)*{} **\crv{(-4,15) & (4,17)}?(1)*\dir{>};
    (4,4)*{}; (4,12) **\dir{-};
    (-12,-4)*{}; (-12,4) **\dir{-};
    (-12,12)*{}; (-12,20) **\dir{-}?(1)*\dir{>};
  (6,-3)*{\scs i};
  (-6,-3)*{\scs j};
  (-14,-3)*{\scs i};
\endxy}\;\;
 \;\; =\;\;
 t_{ij}\sum_{\ell_1+\ell_2=d_{ij}-1} \;\;
\xy 0;/r.17pc/:
  (4,12);(4,-12) **\dir{-}?(.5)*\dir{<};
  (-4,12);(-4,-12) **\dir{-}?(.5)*\dir{<}?(.25)*\dir{>}+(0,0)*{\bullet}+(-3,0)*{\scs \ell_1};
  (12,12);(12,-12) **\dir{-}?(.5)*\dir{<}?(.25)*\dir{>}+(0,0)*{\bullet}+(3,0)*{\scs \ell_2};
  (-6,-9)*{\scs i};     (6.1,-9)*{\scs j};
  (14,-9)*{\scs i};
 \endxy
 \;\; + \;\;
 \sum_{p,q} s_{ij}^{pq} \sum_{\xy (0,2.5)*{\scs \ell_1+\ell_2}; (0,-1)*{\scs =p-1}; \endxy}
 \xy 0;/r.17pc/:
  (4,12);(4,-12)   **\dir{-}?(.5)*\dir{<} ?(.25)*\dir{}+(0,0)*{\bullet}+(-3,0)*{\scs q};
  (-4,12);(-4,-12) **\dir{-}?(.5)*\dir{<}?(.25)*\dir{}+(0,0)*{\bullet}+(-3,0)*{\scs \ell_1};
  (12,12);(12,-12) **\dir{-}?(.5)*\dir{<}?(.25)*\dir{}+(0,0)*{\bullet}+(3,0)*{\scs \ell_2};
  (-6,-9)*{\scs i};     (6.1,-9)*{\scs j};
  (14,-9)*{\scs i};
 \endxy
 \label{eq_r3_hard-gen-cyc}
\end{equation}
\end{enumerate}

\item When $i \ne j$ one has the mixed relations  relating $\cal{E}_i \cal{F}_j$ and $\cal{F}_j \cal{E}_i$:
\begin{equation} \label{mixed_rel-cyc}
 \vcenter{   \xy 0;/r.18pc/:
    (-4,-4)*{};(4,4)*{} **\crv{(-4,-1) & (4,1)}?(1)*\dir{>};
    (4,-4)*{};(-4,4)*{} **\crv{(4,-1) & (-4,1)}?(1)*\dir{<};?(0)*\dir{<};
    (-4,4)*{};(4,12)*{} **\crv{(-4,7) & (4,9)};
    (4,4)*{};(-4,12)*{} **\crv{(4,7) & (-4,9)}?(1)*\dir{>};
  (8,8)*{\lambda};(-6,-3)*{\scs i};
     (6,-3)*{\scs j};
 \endxy}
 \;\; = \;\;
\xy 0;/r.18pc/:
  (3,9);(3,-9) **\dir{-}?(.55)*\dir{>}+(2.3,0)*{};
  (-3,9);(-3,-9) **\dir{-}?(.5)*\dir{<}+(2.3,0)*{};
  (8,2)*{\lambda};(-5,-6)*{\scs i};     (5.1,-6)*{\scs j};
 \endxy
\qquad \quad
    \vcenter{\xy 0;/r.18pc/:
    (-4,-4)*{};(4,4)*{} **\crv{(-4,-1) & (4,1)}?(1)*\dir{<};?(0)*\dir{<};
    (4,-4)*{};(-4,4)*{} **\crv{(4,-1) & (-4,1)}?(1)*\dir{>};
    (-4,4)*{};(4,12)*{} **\crv{(-4,7) & (4,9)}?(1)*\dir{>};
    (4,4)*{};(-4,12)*{} **\crv{(4,7) & (-4,9)};
  (8,8)*{\lambda};(-6,-3)*{\scs i};
     (6,-3)*{\scs j};
 \endxy}
 \;\;=\;\;
\xy 0;/r.18pc/:
  (3,9);(3,-9) **\dir{-}?(.5)*\dir{<}+(2.3,0)*{};
  (-3,9);(-3,-9) **\dir{-}?(.55)*\dir{>}+(2.3,0)*{};
  (8,2)*{\lambda};(-5,-6)*{\scs i};     (5.1,-6)*{\scs j};
 \endxy
\end{equation}

\item \label{item_positivity-cyc} Negative degree bubbles are zero. That is, for all $m \in \Z_+$ one has
\begin{equation} \label{eq_positivity_bubbles-cyc}
\xy 0;/r.18pc/:
 (-12,0)*{\icbub{m}{i}};
 (-8,8)*{\lambda};
 \endxy
  = 0
 \qquad  \text{if $m< \la i,\lambda\ra-1$,} \qquad \xy 0;/r.18pc/: (-12,0)*{\iccbub{m}{i}};
 (-8,8)*{\lambda};
 \endxy = 0\quad
  \text{if $m< -\la i,\lambda\ra-1$.}
\end{equation}
On the other hand, a dotted bubble of degree zero is a scalar multiple
of the identity 2-morphism:
\[
\xy 0;/r.18pc/:
 (0,0)*{\icbub{\la i,\lambda\ra-1}{i}};
  (4,8)*{\lambda};
 \endxy
  =  c_{i,\l} \Id_{\onenn{\lambda}} \quad \text{for $\la i,\lambda\ra \geq 1$,}
  \qquad \quad
  \xy 0;/r.18pc/:
 (0,0)*{\iccbub{-\la i,\lambda\ra-1}{i}};
  (4,8)*{\lambda};
 \endxy  =  c_{i,\l}^{-1} \Id_{\onenn{\lambda}} \quad \text{for $\la i,\lambda\ra \leq -1$.}\]

\item \label{item_highersl2-cyc} For any $i \in I$ one has the extended ${\mathfrak{sl}}_2$-relations. In order to describe certain extended ${\mathfrak{sl}}_2$ relations it is convenient to use a shorthand notation from \cite{Lau1} called fake bubbles. These are diagrams for dotted bubbles where the labels of the number of dots is negative, but the total degree of the dotted bubble taken with these negative dots is still positive. They allow us to write these extended ${\mathfrak{sl}}_2$ relations more uniformly (i.e. independent on whether the weight $\la i,\lambda\ra$ is positive or negative).
\begin{itemize}
 \item Degree zero fake bubbles are equal to
\[
 \xy 0;/r.18pc/:
    (2,0)*{\icbub{\la i,\lambda\ra-1}{i}};
  (12,8)*{\lambda};
 \endxy =
 c_{i,\l}\Id_{\onenn{\lambda}} \quad \text{if $\la i,\lambda\ra \leq 0$,}
  \qquad \quad
\xy 0;/r.18pc/:
    (2,0)*{\iccbub{-\la i,\lambda\ra-1}{i}};
  (12,8)*{\lambda};
 \endxy =   c_{i,\l}^{-1}\Id_{\onenn{\lambda}} \quad  \text{if $\la i,\lambda\ra \geq 0$}.\]

  \item Higher degree fake bubbles for $\la i,\lambda\ra<0$ are defined inductively as
  \begin{equation} \label{eq_fake_nleqz-cyc}
  \vcenter{\xy 0;/r.18pc/:
    (2,-11)*{\icbub{\la i,\lambda\ra-1+j}{i}};
  (12,-2)*{\l};
 \endxy} \;\; =
 \left\{
 \begin{array}{cl}
  \;\; -\;\; c_{i,\l}
\xsum{\xy (0,6)*{};  (0,1)*{\scs a+b=j}; (0,-2)*{\scs b\geq 1}; \endxy}
\;\; \vcenter{\xy 0;/r.18pc/:
    (2,0)*{\cbub{\la i,\lambda\ra-1+a}{}};
    (24,0)*{\ccbub{-\la i,\lambda\ra-1+b}{}};
  (12,8)*{\lambda};
 \endxy}  & \text{if $0 < j < -\la i,\lambda\ra+1$} \\ & \\
   0 & \text{if $j < 0$. }
 \end{array}
\right.
 \end{equation}

  \item Higher degree fake bubbles for $\la i,\lambda\ra>0$ are defined inductively as
   \begin{equation} \label{eq_fake_ngeqz-cyc}
  \vcenter{\xy 0;/r.18pc/:
    (2,-11)*{\iccbub{-\la i,\lambda\ra-1+j}{i}};
  (12,-2)*{\l};
 \endxy} \;\; =
 \left\{
 \begin{array}{cl}
  \;\; -\;\;c_{i,\l}^{-1}
\xsum{\xy (0,6)*{}; (0,1)*{\scs a+b=j}; (0,-2)*{\scs a\geq 1}; \endxy}
\;\; \vcenter{\xy 0;/r.18pc/:
    (2,0)*{\cbub{\la i,\lambda\ra-1+a}{}};
    (24,0)*{\ccbub{-\la i,\lambda\ra-1+b}{}};
  (12,8)*{\lambda};
 \endxy}  & \text{if $0 < j < \la i,\lambda\ra+1$} \\ & \\
   0 & \text{if $j < 0$. }
 \end{array}
\right.
\end{equation}
\end{itemize}
These equations arise from the homogeneous terms in $t$ of the `infinite Grassmannian' equation
\begin{center}
\begin{eqnarray}
 \makebox[0pt]{ $
\left( \xy 0;/r.15pc/:
 (0,0)*{\iccbub{-\la i,\lambda\ra-1}{i}};
  (4,8)*{\l};
 \endxy
 +
 \xy 0;/r.15pc/:
 (0,0)*{\iccbub{-\la i,\lambda\ra-1+1}{i}};
  (4,8)*{\l};
 \endxy t
 + \cdots +
\xy 0;/r.15pc/:
 (0,0)*{\iccbub{-\la i,\lambda\ra-1+\alpha}{i}};
  (4,8)*{\l};
 \endxy t^{\alpha}
 + \cdots
\right)
\left(\xy 0;/r.15pc/:
 (0,0)*{\icbub{\la i,\lambda\ra-1}{i}};
  (4,8)*{\l};
 \endxy
 + \xy 0;/r.15pc/:
 (0,0)*{\icbub{\la i,\lambda\ra-1+1}{i}};
  (4,8)*{\l};
 \endxy t
 +\cdots +
\xy 0;/r.15pc/:
 (0,0)*{\icbub{\la i,\lambda\ra-1+\alpha}{i}};
 (4,8)*{\l};
 \endxy t^{\alpha}
 + \cdots
\right) = \Id_{\onel}.$ } \nn \\ \label{eq_infinite_Grass-cyc}
\end{eqnarray}
\end{center}
Now we can define the extended ${\mathfrak{sl}}_2$ relations.  Note that in \cite{CLau} additional curl relations were provided that can be derived from those above.
For the following relations we employ the convention that all
summations are nonnegative, so that $\sum_{f_1+\cdots+f_n=\alpha}$ is
zero if $\alpha<0$.
\begin{equation} \label{eq_EF-cyc}
 \vcenter{\xy 0;/r.17pc/:
  (-8,0)*{};
  (8,0)*{};
  (-4,10)*{}="t1";
  (4,10)*{}="t2";
  (-4,-10)*{}="b1";
  (4,-10)*{}="b2";(-6,-8)*{\scs i};(6,-8)*{\scs i};
  "t1";"b1" **\dir{-} ?(.5)*\dir{<};
  "t2";"b2" **\dir{-} ?(.5)*\dir{>};
  (10,2)*{\l};
  \endxy}
\;\; = \;\; -\;\;
 \vcenter{   \xy 0;/r.17pc/:
    (-4,-4)*{};(4,4)*{} **\crv{(-4,-1) & (4,1)}?(1)*\dir{>};
    (4,-4)*{};(-4,4)*{} **\crv{(4,-1) & (-4,1)}?(1)*\dir{<};?(0)*\dir{<};
    (-4,4)*{};(4,12)*{} **\crv{(-4,7) & (4,9)};
    (4,4)*{};(-4,12)*{} **\crv{(4,7) & (-4,9)}?(1)*\dir{>};
  (8,8)*{\l};
     (-6,-3)*{\scs i};
     (6.5,-3)*{\scs i};
 \endxy}
  \;\; + \;\;
   \sum_{ \xy  (0,3)*{\scs f_1+f_2+f_3}; (0,0)*{\scs =\la i,\lambda\ra-1};\endxy}
    \vcenter{\xy 0;/r.17pc/:
    (-12,10)*{\l};
    (-8,0)*{};
  (8,0)*{};
  (-4,-15)*{}="b1";
  (4,-15)*{}="b2";
  "b2";"b1" **\crv{(5,-8) & (-5,-8)}; ?(.05)*\dir{<} ?(.93)*\dir{<}
  ?(.8)*\dir{}+(0,-.1)*{\bullet}+(-3,2)*{\scs f_3};
  (-4,15)*{}="t1";
  (4,15)*{}="t2";
  "t2";"t1" **\crv{(5,8) & (-5,8)}; ?(.15)*\dir{>} ?(.95)*\dir{>}
  ?(.4)*\dir{}+(0,-.2)*{\bullet}+(3,-2)*{\scs \; f_1};
  (0,0)*{\iccbub{\scs \quad -\la i,\lambda\ra-1+f_2}{i}};
  (7,-13)*{\scs i};
  (-7,13)*{\scs i};
  \endxy}
\end{equation}
\begin{equation}\label{eq_FE-cyc}
 \vcenter{\xy 0;/r.17pc/:
  (-8,0)*{};(-6,-8)*{\scs i};(6,-8)*{\scs i};
  (8,0)*{};
  (-4,10)*{}="t1";
  (4,10)*{}="t2";
  (-4,-10)*{}="b1";
  (4,-10)*{}="b2";
  "t1";"b1" **\dir{-} ?(.5)*\dir{>};
  "t2";"b2" **\dir{-} ?(.5)*\dir{<};
  (10,2)*{\l};
  (-10,2)*{\l};
  \endxy}
\;\; = \;\;
  -\;\;\vcenter{\xy 0;/r.17pc/:
    (-4,-4)*{};(4,4)*{} **\crv{(-4,-1) & (4,1)}?(1)*\dir{<};?(0)*\dir{<};
    (4,-4)*{};(-4,4)*{} **\crv{(4,-1) & (-4,1)}?(1)*\dir{>};
    (-4,4)*{};(4,12)*{} **\crv{(-4,7) & (4,9)}?(1)*\dir{>};
    (4,4)*{};(-4,12)*{} **\crv{(4,7) & (-4,9)};
  (8,8)*{\l};(-6.5,-3)*{\scs i};  (6,-3)*{\scs i};
 \endxy}
  \;\; + \;\;
    \sum_{ \xy  (0,3)*{\scs g_1+g_2+g_3}; (0,0)*{\scs =-\la i,\lambda\ra-1};\endxy}
    \vcenter{\xy 0;/r.17pc/:
    (-8,0)*{};
  (8,0)*{};
  (-4,-15)*{}="b1";
  (4,-15)*{}="b2";
  "b2";"b1" **\crv{(5,-8) & (-5,-8)}; ?(.1)*\dir{>} ?(.95)*\dir{>}
  ?(.8)*\dir{}+(0,-.1)*{\bullet}+(-3,2)*{\scs g_3};
  (-4,15)*{}="t1";
  (4,15)*{}="t2";
  "t2";"t1" **\crv{(5,8) & (-5,8)}; ?(.15)*\dir{<} ?(.9)*\dir{<}
  ?(.4)*\dir{}+(0,-.2)*{\bullet}+(3,-2)*{\scs g_1};
  (0,0)*{\icbub{\scs \quad\; \la i,\lambda\ra-1 + g_2}{i}};
    (7,-13)*{\scs i};
  (-7,13)*{\scs i};
  (-10,10)*{\l};
  \endxy}
\end{equation}
\end{enumerate}

\subsection{A duality functor}
\label{sec:duality-functor}

As discussed in the introduction, our motivation for considering
this modified diagrammatic framework is in order to construct a cyclic
duality.  This functor arises from the ability to
rotate diagrams.  It is defined as in \cite[3.46]{KL3} by taking the
usual adjoint, using the adjoints defined by cups and caps.  We refer
to that source for the effect this duality has one 1-morphisms, since
this is the same in $\Ucat_Q(\mf{g})$ and
$\Ucat_Q^{cyc}(\mf{g})$. Most importantly $\sE_i$ and $\sF_i$ are
dual.  On
the level of 2-morphisms, our duality is defined by
\begin{equation}
 \tikz[baseline,very thick]{\draw (0,-1) --
  node[midway,draw=black,fill=white]{$f^*$}
  (0,1);}:=\tikz[baseline,very
thick]{\node[draw=black,fill=white] (a) at (0,0) {$f$}; \draw (-.5,-1) to[out=90,in=180]
  (-.25,.6) to[out=0,in=90] (a.90);\draw
(a.-90)  to[out=-90,in=180]
  (.25,-.6) to[out=0,in=-90] (.5,1);}= \tikz[baseline,very
thick]{\node[draw=black,fill=white] (a) at (0,0) {$f$}; \draw (.5,-1) to[out=90,in=0]
  (.25,.6) to[out=180,in=90] (a.90);\draw
(a.-90)  to[out=-90,in=0]
  (-.25,-.6) to[out=180,in=-90] (-.5,1);}\label{eq:duality}
\end{equation}
The second equality holds by applying (\ref{eq_cyclic_dot-cyc},\ref{eq_cyclic}).
Here, it is very important that the diagrams are interpreted in the
category $\Ucat_Q^{cyc}(\mf{g})$, since
 cyclicity is precisely the property that these two operations give
 the same duality.  The second equality of \eqref{eq:duality} would not hold if these diagrams were
 interpreted in $\Ucat_Q(\mf{g})$.

%
\section{An isomorphism of 2-categories}
\label{sec:iso}
%

\begin{thm} \label{thm}
There is an isomorphism of 2-categories
\[
 \cal{M} \maps \Ucat_Q^{cyc}(\mf{g}) \longrightarrow \Ucat_Q(\mf{g})
\]
where  $\Ucat_Q(\mf{g})$ is the 2-category defined in \cite{CLau}.
\end{thm}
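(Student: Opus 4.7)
The plan is to define $\cal{M}$ to be the identity on objects and on the generating 1-morphisms $\sE_i$, $\sF_i$, so that the content of the theorem lies entirely in a careful rescaling of the generating 2-morphisms. The paper already announces that the isomorphism is ``a straightforward scalar multiplication,'' so the task is to identify the correct scalars and then check that each defining relation of $\Ucat_Q^{cyc}(\mf{g})$ is sent to a true relation in $\Ucat_Q(\mf{g})$.

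First, I would leave the upward KLR generators (upward dots and upward crossings) untouched, since the KLR relations \eqref{eq_r2_ij-gen-cyc}--\eqref{eq_r3_hard-gen-cyc} are identical in both 2-categories when interpreted on upward strands. The degrees of freedom come from rescaling the four cups and caps $\bbpef{i}$, $\bbpfe{i}$, $\bbcef{i}$, $\bbcfe{i}$ at each weight $\lambda$ by scalars $\alpha_{i,\lambda}^{\pm}, \beta_{i,\lambda}^{\pm}\in\Bbbk^{\times}$. Two constraints pin these scalars down: the two biadjointness equations \eqref{eq_biadjoint1-cyc}--\eqref{eq_biadjoint2-cyc} each force one product of scalars to equal $1$ (so caps and cups rescale reciprocally), and the requirement that the degree zero bubble relation in item~\eqref{item_positivity-cyc}, which reads $c_{i,\lambda}\Id$ on the cyclic side and $\Id$ on the $\Ucat_Q$ side, is matched. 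Together these determine the rescaling up to the compatibility $c_{i,\lambda+\alpha_j}/c_{i,\lambda}=t_{ij}$ already imposed on the bubble parameters, which guarantees that the assignment is consistent as $\lambda$ varies.

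With the scalars in hand, I would verify the remaining relations in order: first cyclicity of dots and crossings \eqref{eq_cyclic_dot-cyc}--\eqref{eq_cyclic}, then the sideways crossing identities \eqref{eq_crossl-gen-cyc}--\eqref{eq_crossr-gen-cyc}, then the mixed relation \eqref{mixed_rel-cyc}, and finally the fake bubble recursions \eqref{eq_fake_nleqz-cyc}--\eqref{eq_fake_ngeqz-cyc} together with the extended $\mf{sl}_2$ identities \eqref{eq_EF-cyc}--\eqref{eq_FE-cyc}. Each relation is checked by inserting the rescaling factors on every generator appearing and confirming that the resulting scalar reduces to the scalar (or sign) appearing on the corresponding relation in \cite{CLau}; the key identity driving all these cancellations is $c_{i,\lambda+\alpha_j}/c_{i,\lambda}=t_{ij}$, which controls the bubble parameters as one slides them past differently-oriented strands. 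Cyclicity is what allows the nicer symmetric form of \eqref{mixed_rel-cyc} in $\Ucat_Q^{cyc}$ to translate into the less symmetric form of the corresponding relation in $\Ucat_Q$ after rescaling.

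Showing that $\cal{M}$ is an isomorphism (as opposed to merely a 2-functor) is then immediate: the inverse is given by the reciprocal scalar multiplication, and the two composites are the identity on generators. The main obstacle is the bookkeeping in the extended $\mf{sl}_2$ relations \eqref{eq_EF-cyc}--\eqref{eq_FE-cyc}, where many rescaled generators (including rescaled fake bubbles built recursively from \eqref{eq_fake_nleqz-cyc}--\eqref{eq_fake_ngeqz-cyc} and the infinite Grassmannian identity \eqref{eq_infinite_Grass-cyc}) interact simultaneously; verifying that all factors of $c_{i,\lambda}^{\pm 1}$ combine consistently on both sides of these identities is where the bulk of the work lies, even though conceptually the argument is just a scalar rescaling.
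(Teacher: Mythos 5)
Your overall strategy is the paper's: $\cal{M}$ is the identity on objects and 1-morphisms, rescales generating 2-morphisms by units built from the $c_{i,\lambda}$ and $t_{ij}$, and one verifies each defining relation, with $c_{i,\lambda+\alpha_j}/c_{i,\lambda}=t_{ij}$ driving the cancellations and the inverse given by reciprocal rescaling. However, there is a concrete gap in how you pin down the functor. In the presentation of Definition~\ref{defU_cat-cyc} the downward crossing and the two sideways crossings are \emph{independent generating 2-morphisms}, not composites of cups, caps and upward crossings, so $\cal{M}$ must be specified on them too; your assertion that ``the degrees of freedom come from rescaling the four cups and caps'' misses this. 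If those crossings are sent to themselves with coefficient $1$ (which is what your phrasing implies, since you only exempt the \emph{upward} KLR generators), then the crossing cyclicity relation \eqref{eq_cyclic} is mapped to a false statement in $\Ucat_Q(\mf{g})$: rotating the upward crossing through the rescaled cups and caps produces the factor
\[
c_{j,\l-\alpha_i-\alpha_j}^{-1}\,c_{i,\l-\alpha_i}^{-1}\, c_{i,\l-\alpha_j}\, c_{j,\l}=t_{ij}^{-1}t_{ji},
\]
and in \cite{CLau} the downward crossing differs from its rotation by precisely such a factor (this failure of cyclicity is the raison d'\^etre of the whole construction). The same issue afflicts \eqref{eq_crossl-gen-cyc}--\eqref{eq_crossr-gen-cyc}, and consequently the mixed relation \eqref{mixed_rel-cyc} would be sent to ``composite $=$ identity'' rather than to the CLau version, which carries a $t_{ij}^{-1}$ or $t_{ji}^{-1}$.

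The repair is small and is exactly what the paper does: besides rescaling the cap $\cal{E}_i\cal{F}_i\onel\to\onel$ by $c_{i,\lambda}$ and the cup $\onel\to\cal{F}_i\cal{E}_i\onel$ by $c_{i,\lambda}^{-1}$ (the other cup/cap pair being fixed, consistent with your biadjointness and degree-zero-bubble constraints), one must also rescale the downward crossing by $t_{ji}$ and one of the two sideways crossings by $t_{ij}^{-1}$, leaving the other sideways crossing and all upward generators alone. These additional scalars are forced by requiring \eqref{eq_cyclic} and \eqref{eq_crossl-gen-cyc}--\eqref{eq_crossr-gen-cyc} to land on the corresponding non-cyclic relations of $\Ucat_Q(\mf{g})$. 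With that amendment, the rest of your verification plan --- KLR relations untouched, clockwise and counterclockwise bubbles (real and fake) scaling by $c_{i,\lambda}^{\pm1}$, and the extended $\mf{sl}_2$ relations checked via the infinite Grassmannian recursion --- coincides with the paper's proof.
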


Note here that $\Ucat_Q(\mf{g})$, defined when $\Bbbk$ is a field in
\cite{CLau}, can be extended to the case $\Bbbk$ is a commutative ring
with unit.

\begin{proof}
Define the 2-functor $\cal{M}$ to act as the identity on objects and 1-morphisms
and by rescaling the following generators:
\begin{align}
\cal{M}\left(
\xy 0;/r.17pc/:
    (0,0)*{\bbcfe{i}};
    (8,4)*{\scs  \lambda};
    (-10,0)*{};(10,0)*{};
    \endxy \;\; \right)
& \;\; := \;\; c_{i,\lambda} \cdot
    \hspace{-2mm}\xy 0;/r.17pc/:
    (0,0)*{\bbcfe{i}};
    (8,4)*{\scs  \lambda};
    (-10,0)*{};(10,0)*{};
    \endxy   \nn \\
\cal{M}\left(\vcenter{\xy 0;/r.17pc/:
    (0,-3)*{\bbpef{i}};
    (8,-5)*{\scs \lambda};
    (-10,0)*{};(10,0)*{};
    \endxy}  \right)
 &\;\; := \;\;
   c_{i,\lambda}^{-1} \cdot \hspace{-2mm} \vcenter{\xy 0;/r.17pc/:
    (0,-3)*{\bbpef{i}};
    (8,-5)*{\scs \lambda};
    (-10,0)*{};(10,0)*{};
    \endxy}  \nn \\
 \cal{M}\left(\xy 0;/r.17pc/:
  (0,0)*{\xybox{
    (-4,4)*{};(4,-4)*{} **\crv{(-4,1) & (4,-1)}?(1)*\dir{>} ;
    (4,4)*{};(-4,-4)*{} **\crv{(4,1) & (-4,-1)}?(1)*\dir{>};
    (-6.5,-3)*{\scs i};
     (6.5,-3)*{\scs j};
     (9,1)*{\scs  \lambda};
     (-10,0)*{};(10,0)*{};
     }};
  \endxy \right)
 &   \;\; := \;\; t_{ji}  \;
   \xy 0;/r.17pc/:
  (0,0)*{\xybox{
    (-4,4)*{};(4,-4)*{} **\crv{(-4,1) & (4,-1)}?(1)*\dir{>} ;
    (4,4)*{};(-4,-4)*{} **\crv{(4,1) & (-4,-1)}?(1)*\dir{>};
    (-6.5,-3)*{\scs i};
     (6.5,-3)*{\scs j};
     (9,1)*{\scs  \lambda};
     (-10,0)*{};(10,0)*{};
     }};
  \endxy \nn \\
   \cal{M}\left(  \xy 0;/r.17pc/:
  (0,0)*{\xybox{
    (-4,-4)*{};(4,4)*{} **\crv{(-4,-1) & (4,1)}?(1)*\dir{>} ;
    (4,-4)*{};(-4,4)*{} **\crv{(4,-1) & (-4,1)}?(0)*\dir{<};
    (-5,-3)*{\scs j};
     (6.5,-3)*{\scs i};
     (9,2)*{ \lambda};
     (-12,0)*{};(12,0)*{};
     }};
  \endxy \right)
&  \;\; := \;\;  t_{ij}^{-1}   \;
    \xy 0;/r.17pc/:
  (0,0)*{\xybox{
    (-4,-4)*{};(4,4)*{} **\crv{(-4,-1) & (4,1)}?(1)*\dir{>} ;
    (4,-4)*{};(-4,4)*{} **\crv{(4,-1) & (-4,1)}?(0)*\dir{<};
    (-5,-3)*{\scs j};
     (6.5,-3)*{\scs i};
     (9,2)*{ \lambda};
     (-12,0)*{};(12,0)*{};
     }};
  \endxy
\end{align}

The map $\cal{M}$ preserves relations \eqref{eq_biadjoint1-cyc} -- \eqref{eq_cyclic_dot-cyc}.
The cyclic relations \eqref{eq_cyclic} become
\begin{equation}
  \label{label1}
 t_{ji}  \xy 0;/r.17pc/:
  (0,0)*{\xybox{
    (-4,4)*{};(4,-4)*{} **\crv{(-4,1) & (4,-1)}?(1)*\dir{>} ;
    (4,4)*{};(-4,-4)*{} **\crv{(4,1) & (-4,-1)}?(1)*\dir{>};
    (-6.5,-3)*{\scs i};
     (6.5,-3)*{\scs j};
     (9,1)*{\scs  \lambda};
     (-10,0)*{};(10,0)*{};
     }};
  \endxy \quad = \quad
 t_{ij}^{-1}t_{ji} \xy 0;/r.17pc/:
  (0,0)*{\xybox{
    (4,-4)*{};(-4,4)*{} **\crv{(4,-1) & (-4,1)}?(1)*\dir{>};
    (-4,-4)*{};(4,4)*{} **\crv{(-4,-1) & (4,1)};
     (-4,4)*{};(18,4)*{} **\crv{(-4,16) & (18,16)} ?(1)*\dir{>};
     (4,-4)*{};(-18,-4)*{} **\crv{(4,-16) & (-18,-16)} ?(1)*\dir{<}?(0)*\dir{<};
     (-18,-4);(-18,12) **\dir{-};(-12,-4);(-12,12) **\dir{-};
     (18,4);(18,-12) **\dir{-};(12,4);(12,-12) **\dir{-};
     (22,1)*{ \lambda};
     (-10,0)*{};(10,0)*{};
     (-4,-4)*{};(-12,-4)*{} **\crv{(-4,-10) & (-12,-10)}?(1)*\dir{<}?(0)*\dir{<};
      (4,4)*{};(12,4)*{} **\crv{(4,10) & (12,10)}?(1)*\dir{>}?(0)*\dir{>};
      (-20,11)*{\scs j};(-10,11)*{\scs i};
      (20,-11)*{\scs j};(10,-11)*{\scs i};
     }};
  \endxy
\quad =  \quad
\xy 0;/r.17pc/:
  (0,0)*{\xybox{
    (-4,-4)*{};(4,4)*{} **\crv{(-4,-1) & (4,1)}?(1)*\dir{>};
    (4,-4)*{};(-4,4)*{} **\crv{(4,-1) & (-4,1)};
     (4,4)*{};(-18,4)*{} **\crv{(4,16) & (-18,16)} ?(1)*\dir{>};
     (-4,-4)*{};(18,-4)*{} **\crv{(-4,-16) & (18,-16)} ?(1)*\dir{<}?(0)*\dir{<};
     (18,-4);(18,12) **\dir{-};(12,-4);(12,12) **\dir{-};
     (-18,4);(-18,-12) **\dir{-};(-12,4);(-12,-12) **\dir{-};
     (22,1)*{ \lambda};
     (-10,0)*{};(10,0)*{};
      (4,-4)*{};(12,-4)*{} **\crv{(4,-10) & (12,-10)}?(1)*\dir{<}?(0)*\dir{<};
      (-4,4)*{};(-12,4)*{} **\crv{(-4,10) & (-12,10)}?(1)*\dir{>}?(0)*\dir{>};
      (20,11)*{\scs i};(10,11)*{\scs j};
      (-20,-11)*{\scs i};(-10,-11)*{\scs j};
     }};
  \endxy
\end{equation}
since $c_{j,\l-\alpha_i-\alpha_j}^{-1}c_{i,\l-\alpha_i}^{-1} c_{i,\l-\alpha_j} c_{j,\l}
= \frac{c_{i,\l-\alpha_j}}{c_{i,\l-\alpha_i}}
\frac{c_{j,\l}}{c_{j,\l-\alpha_i-\alpha_j}}=t_{ij}^{-1}t_{ji}$.

Likewise, since $c_{i,\l} c_{i,\l-\alpha_i+\alpha_j}^{-1} = c_{i,\l}/c_{i,\l+\alpha_j}  = t_{ij}^{-1}$ and $c_{j,\l}^{-1} c_{j,\l+\alpha_j-\alpha_i} = c_{j,\l+\alpha_j-\alpha_i}/ c_{j,\l+\alpha_j} =t_{ji}^{-1}$, equations \eqref{eq_crossl-gen-cyc} and \eqref{eq_crossr-gen-cyc} become
\begin{equation}
  t_{ij}^{-1} \;\; \xy 0;/r.18pc/:
  (0,0)*{\xybox{
    (-4,-4)*{};(4,4)*{} **\crv{(-4,-1) & (4,1)}?(1)*\dir{>} ;
    (4,-4)*{};(-4,4)*{} **\crv{(4,-1) & (-4,1)}?(0)*\dir{<};
    (-5,-3)*{\scs j};
     (6.5,-3)*{\scs i};
     (9,2)*{ \lambda};
     (-12,0)*{};(12,0)*{};
     }};
  \endxy
\quad = \quad t_{ij}^{-1}
 \xy 0;/r.17pc/:
  (0,0)*{\xybox{
    (4,-4)*{};(-4,4)*{} **\crv{(4,-1) & (-4,1)}?(1)*\dir{>};
    (-4,-4)*{};(4,4)*{} **\crv{(-4,-1) & (4,1)};
     (-4,4);(-4,12) **\dir{-};
     (-12,-4);(-12,12) **\dir{-};
     (4,-4);(4,-12) **\dir{-};(12,4);(12,-12) **\dir{-};
     (16,1)*{\lambda};
     (-10,0)*{};(10,0)*{};
     (-4,-4)*{};(-12,-4)*{} **\crv{(-4,-10) & (-12,-10)}?(1)*\dir{<}?(0)*\dir{<};
      (4,4)*{};(12,4)*{} **\crv{(4,10) & (12,10)}?(1)*\dir{>}?(0)*\dir{>};
      (-14,11)*{\scs i};(-2,11)*{\scs j};
      (14,-11)*{\scs i};(2,-11)*{\scs j};
     }};
  \endxy
  \quad = \quad
 \xy 0;/r.17pc/:
  (0,0)*{\xybox{
    (-4,-4)*{};(4,4)*{} **\crv{(-4,-1) & (4,1)}?(1)*\dir{<};
    (4,-4)*{};(-4,4)*{} **\crv{(4,-1) & (-4,1)};
     (4,4);(4,12) **\dir{-};
     (12,-4);(12,12) **\dir{-};
     (-4,-4);(-4,-12) **\dir{-};(-12,4);(-12,-12) **\dir{-};
     (16,1)*{\lambda};
     (10,0)*{};(-10,0)*{};
     (4,-4)*{};(12,-4)*{} **\crv{(4,-10) & (12,-10)}?(1)*\dir{>}?(0)*\dir{>};
      (-4,4)*{};(-12,4)*{} **\crv{(-4,10) & (-12,10)}?(1)*\dir{<}?(0)*\dir{<};
     }};
     (12,11)*{\scs j};(0,11)*{\scs i};
      (-17,-11)*{\scs j};(-5,-11)*{\scs i};
  \endxy
\end{equation}
\begin{equation}
  \xy 0;/r.18pc/:
  (0,0)*{\xybox{
    (-4,-4)*{};(4,4)*{} **\crv{(-4,-1) & (4,1)}?(0)*\dir{<} ;
    (4,-4)*{};(-4,4)*{} **\crv{(4,-1) & (-4,1)}?(1)*\dir{>};
    (5.1,-3)*{\scs i};
     (-6.5,-3)*{\scs j};
     (9,2)*{ \lambda};
     (-12,0)*{};(12,0)*{};
     }};
  \endxy
\quad = \quad
 \xy 0;/r.17pc/:
  (0,0)*{\xybox{
    (-4,-4)*{};(4,4)*{} **\crv{(-4,-1) & (4,1)}?(1)*\dir{>};
    (4,-4)*{};(-4,4)*{} **\crv{(4,-1) & (-4,1)};
     (4,4);(4,12) **\dir{-};
     (12,-4);(12,12) **\dir{-};
     (-4,-4);(-4,-12) **\dir{-};(-12,4);(-12,-12) **\dir{-};
     (16,-6)*{\lambda};
     (10,0)*{};(-10,0)*{};
     (4,-4)*{};(12,-4)*{} **\crv{(4,-10) & (12,-10)}?(1)*\dir{<}?(0)*\dir{<};
      (-4,4)*{};(-12,4)*{} **\crv{(-4,10) & (-12,10)}?(1)*\dir{>}?(0)*\dir{>};
      (14,11)*{\scs j};(2,11)*{\scs i};
      (-14,-11)*{\scs j};(-2,-11)*{\scs i};
     }};
  \endxy
  \quad = \quad t_{ji} \;\;
  \xy 0;/r.17pc/:
  (0,0)*{\xybox{
    (4,-4)*{};(-4,4)*{} **\crv{(4,-1) & (-4,1)}?(1)*\dir{<};
    (-4,-4)*{};(4,4)*{} **\crv{(-4,-1) & (4,1)};
     (-4,4);(-4,12) **\dir{-};
     (-12,-4);(-12,12) **\dir{-};
     (4,-4);(4,-12) **\dir{-};(12,4);(12,-12) **\dir{-};
     (16,6)*{\lambda};
     (-10,0)*{};(10,0)*{};
     (-4,-4)*{};(-12,-4)*{} **\crv{(-4,-10) & (-12,-10)}?(1)*\dir{>}?(0)*\dir{>};
      (4,4)*{};(12,4)*{} **\crv{(4,10) & (12,10)}?(1)*\dir{<}?(0)*\dir{<};
      (-14,11)*{\scs i};(-2,11)*{\scs j};(14,-11)*{\scs i};(2,-11)*{\scs j};
     }};
  \endxy
\end{equation}

All of the KLR relations are preserved by $\cal{M}$ and the mixed relation \eqref{mixed_rel-cyc} for $i \neq j$ becomes
\begin{equation}
t_{ji}^{-1} \vcenter{   \xy 0;/r.18pc/:
    (-4,-4)*{};(4,4)*{} **\crv{(-4,-1) & (4,1)}?(1)*\dir{>};
    (4,-4)*{};(-4,4)*{} **\crv{(4,-1) & (-4,1)}?(1)*\dir{<};?(0)*\dir{<};
    (-4,4)*{};(4,12)*{} **\crv{(-4,7) & (4,9)};
    (4,4)*{};(-4,12)*{} **\crv{(4,7) & (-4,9)}?(1)*\dir{>};
  (8,8)*{\lambda};(-6,-3)*{\scs i};
     (6,-3)*{\scs j};
 \endxy}
 \;\; = \;\;
\xy 0;/r.18pc/:
  (3,9);(3,-9) **\dir{-}?(.55)*\dir{>}+(2.3,0)*{};
  (-3,9);(-3,-9) **\dir{-}?(.5)*\dir{<}+(2.3,0)*{};
  (8,2)*{\lambda};(-5,-6)*{\scs i};     (5.1,-6)*{\scs j};
 \endxy
\qquad \quad
  t_{ij}^{-1}  \vcenter{\xy 0;/r.18pc/:
    (-4,-4)*{};(4,4)*{} **\crv{(-4,-1) & (4,1)}?(1)*\dir{<};?(0)*\dir{<};
    (4,-4)*{};(-4,4)*{} **\crv{(4,-1) & (-4,1)}?(1)*\dir{>};
    (-4,4)*{};(4,12)*{} **\crv{(-4,7) & (4,9)}?(1)*\dir{>};
    (4,4)*{};(-4,12)*{} **\crv{(4,7) & (-4,9)};
  (8,8)*{\lambda};(-6,-3)*{\scs i};
     (6,-3)*{\scs j};
 \endxy}
 \;\;=\;\;
\xy 0;/r.18pc/:
  (3,9);(3,-9) **\dir{-}?(.5)*\dir{<}+(2.3,0)*{};
  (-3,9);(-3,-9) **\dir{-}?(.55)*\dir{>}+(2.3,0)*{};
  (8,2)*{\lambda};(-5,-6)*{\scs i};     (5.1,-6)*{\scs j};
 \endxy
\end{equation}

It is not hard to check using the definition of fake bubbles and the 2-functor $\cal{M}$ that for all values of $\alpha$
\[
 \cal{M}\left( \;\; \vcenter{\xy 0;/r.18pc/:
    (2,-11)*{\icbub{\la i,\lambda\ra-1+\alpha}{i}};
  (12,-2)*{\l};
 \endxy} \;\; \right)
 = c_{i,\l} \;\; \vcenter{\xy 0;/r.18pc/:
    (2,-11)*{\icbub{\la i,\lambda\ra-1+\alpha}{i}};
  (12,-2)*{\l};
 \endxy}
 \qquad \qquad
  \cal{M}\left( \;\; \vcenter{\xy 0;/r.18pc/:
    (2,-11)*{\iccbub{-\la i,\lambda\ra-1+\alpha}{i}};
  (12,-2)*{\l};
 \endxy} \;\; \right)
 = c_{i,\l}^{-1} \;\; \vcenter{\xy 0;/r.18pc/:
    (2,-11)*{\iccbub{-\la i,\lambda\ra-1+\alpha}{i}};
  (12,-2)*{\l};
 \endxy}
\]
One can check that all of the $\mf{sl}_2$ relations will be invariant under this map.

The 2-functor $\cal{M}$ has the obvious inverse using the inverse of the rescalings.
\end{proof}

\begin{cor} \label{cor}
(\cite{KL3,Web5})
If $\Bbbk$ is a field, there is an isomorphism
\begin{eqnarray} \label{def_gamma}
\gamma\maps  \UA & \longrightarrow& K_0(\UcatD_Q^{cyc})
\end{eqnarray}
of linear categories, where $\UcatD_Q^{cyc}$ denotes the Karoubi envelope of the 2-category $\Ucat_Q^{cyc}(\mf{g})$.
\end{cor}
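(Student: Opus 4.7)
The plan is to reduce this corollary directly to the known categorification theorem for $\Ucat_Q(\mf{g})$ via the 2-isomorphism $\cal{M}$ constructed in Theorem~\ref{thm}. First I would invoke Theorem~\ref{thm} to get an isomorphism of 2-categories $\cal{M}\maps \Ucat_Q^{cyc}(\mf{g}) \to \Ucat_Q(\mf{g})$. Since $\cal{M}$ acts as the identity on objects and 1-morphisms and rescales 2-morphisms by invertible scalars in $\Bbbk^\times$, its inverse exists (as shown at the end of the proof of Theorem~\ref{thm}), so $\cal{M}$ is a genuine isomorphism of graded linear 2-categories.

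Next I would observe that any isomorphism of additive (graded linear) 2-categories induces an isomorphism of their Karoubi envelopes, because the Karoubi completion is functorial and preserves isomorphisms. Thus $\cal{M}$ extends to an isomorphism $\bar{\cal{M}}\maps \UcatD_Q^{cyc} \to \UcatD_Q$ of idempotent-complete graded linear 2-categories. Passing to split Grothendieck groups of Hom-categories then yields an isomorphism
\[
K_0(\bar{\cal{M}})\maps K_0(\UcatD_Q^{cyc}) \xrightarrow{\;\simeq\;} K_0(\UcatD_Q)
\]
of $\Z[q,q^{-1}]$-linear categories (with the $q$-action coming from the grading shift).

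Finally I would appeal to the categorification theorem of Khovanov--Lauda and Webster (\cite{KL3,Web5}), which provides a canonical isomorphism
\[
\gamma_0\maps \UA \xrightarrow{\;\simeq\;} K_0(\UcatD_Q)
\]
of $\cal{A}$-linear categories. Defining $\gamma := K_0(\bar{\cal{M}})^{-1} \circ \gamma_0$ produces the desired isomorphism. The only point requiring minor care is that $\gamma_0$ in \cite{KL3,Web5} is stated under the running hypothesis that $\Bbbk$ is a field, which is exactly the hypothesis assumed here; this ensures the split Grothendieck group behaves well under Karoubi completion and that the categorification result applies verbatim. No new computation is needed, as the hard work of identifying $K_0$ with $\UA$ is already done on the $\Ucat_Q(\mf{g})$ side, and the passage between the two 2-categories is the scalar rescaling of Theorem~\ref{thm}.
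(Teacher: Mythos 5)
Your argument is exactly the intended one: the paper gives no explicit proof of this corollary, simply citing \cite{KL3,Web5}, and the implicit reasoning is precisely your chain of transporting the known categorification theorem for $\Ucat_Q(\mf{g})$ across the scalar-rescaling isomorphism $\cal{M}$ of Theorem~\ref{thm}, which passes to Karoubi envelopes and split Grothendieck groups. Your proposal is correct and matches the paper's approach.
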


\begin{rem}
Several of the relations in Definition~\ref{defU_cat-cyc} are redundant.  In particular, applying the inverse of the isomorphism $\cal{M}$ from Theorem~\ref{thm} to Brundan's rigidity theorem implies that the cyclicity relations \ref{eq_cyclic_dot-cyc} -- \eqref{eq_crossl-gen-cyc} follow from the others see \cite[Theorem 5.3]{Brundan}.   Furthermore, one can also omit \eqref{eq_biadjoint2-cyc} \cite[Theorem 4.3]{Brundan}.
\end{rem}

%
\section{Helpful relations in $\UcatD_Q^{cyc}$}
\label{sec:relations}
%

%
\subsection{Curl relations}\label{sec:curl}
%

In this section we show that the curl relations follow from those already introduced.  Since this fact is not immediately accessible in the existing literature we provide a proof here.
\begin{lem} \label{lem-partcurl}
The following relations
\[
  \xy 0;/r.17pc/:
  (14,8)*{\l};
  (-3,-10)*{};(3,5)*{} **\crv{(-3,-2) & (2,1)}?(1)*\dir{>};?(.15)*\dir{>};
    (3,-5)*{};(-3,10)*{} **\crv{(2,-1) & (-3,2)}?(.85)*\dir{>} ?(.1)*\dir{>};
  (3,5)*{}="t1";  (9,5)*{}="t2";
  (3,-5)*{}="t1'";  (9,-5)*{}="t2'";
   "t1";"t2" **\crv{(4,8) & (9, 8)};
   "t1'";"t2'" **\crv{(4,-8) & (9, -8)};
   "t2'";"t2" **\crv{(10,0)} ;
   (-6,-8)*{\scs i};
 \endxy
\;\; = \;\;
\left\{
  \begin{array}{ll}
    -c_{i,\l} \Id_{\cal{E}_i\onel}, & \hbox{for $\la i, \l \ra =0$,} \\
    0, & \hbox{for $\la i, \l \ra >0$,}
  \end{array}
\right. \qquad \quad
  \xy 0;/r.17pc/:
  (-14,8)*{\l};
  (3,-10)*{};(-3,5)*{} **\crv{(3,-2) & (-2,1)}?(1)*\dir{>};?(.15)*\dir{>};
    (-3,-5)*{};(3,10)*{} **\crv{(-2,-1) & (3,2)}?(.85)*\dir{>} ?(.1)*\dir{>};
  (-3,5)*{}="t1";  (-9,5)*{}="t2";
  (-3,-5)*{}="t1'";  (-9,-5)*{}="t2'";
   "t1";"t2" **\crv{(-4,8) & (-9, 8)};
   "t1'";"t2'" **\crv{(-4,-8) & (-9, -8)};
   "t2'";"t2" **\crv{(-10,0)} ;
   (6,-8)*{\scs i};
 \endxy \;\; = \;\;
\left\{
  \begin{array}{ll}
    c_{i,\l}^{-1} \Id_{\onel\cal{E}_i}, & \hbox{for $\la i, \l \ra =0$,} \\
    0, & \hbox{for $\la i, \l \ra <0$,}
  \end{array}
\right.
\]
\end{lem}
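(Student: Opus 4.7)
The plan is to derive both curl relations from the extended $\mathfrak{sl}_2$-relations \eqref{eq_EF-cyc}--\eqref{eq_FE-cyc} together with the cyclic biadjoint structure \eqref{eq_biadjoint1-cyc}--\eqref{eq_biadjoint2-cyc} and the bubble conventions in item \eqref{item_positivity-cyc}.  I will sketch the argument for the right curl in detail; the left curl follows symmetrically (using \eqref{eq_EF-cyc} in place of \eqref{eq_FE-cyc}).

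Using cyclicity \eqref{eq_cyclic_dot-cyc}--\eqref{eq_cyclic} together with the biadjoint relations, I would first isotope the right curl to realise it as a biadjoint cup introducing an $\cal{F}_i\cal{E}_i$-pair to the right of the strand, followed by the identity on $\cal{F}_i\cal{E}_i\onel$ at the inner weight $\lambda+\alpha_i$ (up to the sideways rearrangement afforded by \eqref{eq_crossr-gen-cyc}), followed by a biadjoint cap closing the pair.  Substituting this middle identity using \eqref{eq_FE-cyc} writes the curl as $-T_{1}+T_{2}$, where $T_{1}$ comes from the cap--crossing--cup term and $T_{2}$ is the bubble sum indexed by $g_{1}+g_{2}+g_{3}=-\langle i,\lambda\rangle-1$.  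Collapsing the outer cup/cap of the curl against $T_{1}$ via \eqref{eq_biadjoint1-cyc}--\eqref{eq_biadjoint2-cyc} produces $\mathrm{Id}_{\cal{E}_{i}\onel}$ tensored with a single counterclockwise bubble of the form $\smccbub{-\langle i,\lambda\rangle-1}$ sitting beside the strand; collapsing against each summand of $T_{2}$ leaves nontrivial dots on the surviving $\cal{E}_{i}$ strand, so these contribute only when such dots are present on the curl (they are not, in our hypothesis).

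The two cases of the lemma then follow by inspection.  If $\langle i,\lambda\rangle=0$, the index set of $T_{2}$ is empty, while the bubble in the surviving $-T_{1}$-contribution is $\smccbub{-1}$, a degree-zero fake bubble which by item \eqref{item_positivity-cyc} equals $c_{i,\lambda}\cdot\mathrm{Id}_{\onel}$; this gives precisely $-c_{i,\lambda}\cdot\mathrm{Id}_{\cal{E}_{i}\onel}$.  If $\langle i,\lambda\rangle>0$, the index set of $T_{2}$ is again empty, and the bubble in $T_{1}$ has index strictly less than $-1$, so it vanishes by \eqref{eq_positivity_bubbles-cyc}; the curl is zero.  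For the left curl one runs the same argument on the other side, using \eqref{eq_EF-cyc} and \eqref{eq_crossl-gen-cyc}; the relevant degree-zero bubble that emerges is now the clockwise one $\smcbub{-1}$, which by item \eqref{item_positivity-cyc} evaluates to $c_{i,\lambda}^{-1}\cdot\mathrm{Id}$, and the vanishing in the case $\langle i,\lambda\rangle<0$ is again immediate from \eqref{eq_positivity_bubbles-cyc}.

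The main obstacle is purely diagrammatic bookkeeping: correctly identifying the orientation (clockwise vs.\ counterclockwise) of the fake bubble produced after the cap--cup collapse, tracking the sign coming from the ``$-$'' in \eqref{eq_FE-cyc}/\eqref{eq_EF-cyc}, and verifying that the scalar emerging from item \eqref{item_positivity-cyc} is $c_{i,\lambda}$ on the right and $c_{i,\lambda}^{-1}$ on the left.  Once this calibration is in hand, the vanishing statements follow at once from \eqref{eq_positivity_bubbles-cyc}, and the degree-zero statements reduce to reading off a single bubble value.
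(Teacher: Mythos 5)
Your overall strategy---reduce the curl to the extended $\mathfrak{sl}_2$ relation \eqref{eq_FE-cyc}/\eqref{eq_EF-cyc} and then evaluate a single bubble---is the right one, but the pivotal first step is false and the error propagates. The right curl is \emph{not} isotopic to a cup creating a pair to the right of the strand, followed by the identity, followed by a cap: the curl necessarily contains one self-crossing, and no isotopy or application of \eqref{eq_crossr-gen-cyc} can remove it. Concretely, the curl factors as $(\mathrm{cap})\circ(\psi_{ii}\otimes\mathrm{Id}_{\cal{F}_i})\circ(\mathrm{cup})$ and has degree $-2(\lambda,\alpha_i)$, whereas $(\mathrm{cap})\circ(\mathrm{cup})$ has degree $2d_i-2(\lambda,\alpha_i)$; the two differ by the degree $-(\alpha_i,\alpha_i)$ of the crossing, so the object you substitute \eqref{eq_FE-cyc} into is not the curl. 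A symptom of this appears in your bubble calibration, which is backwards: by item \eqref{item_positivity-cyc} the degree-zero \emph{counterclockwise} bubble equals $c_{i,\lambda}^{-1}\,\Id_{\onel}$ and the \emph{clockwise} one equals $c_{i,\lambda}\,\Id_{\onel}$, so the counterclockwise bubble you extract for the right curl would yield $-c_{i,\lambda}^{-1}$ rather than the required $-c_{i,\lambda}$ (and symmetrically for the left curl). The mismatch is exactly the trace of the crossing you discarded.

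The missing ingredient is the nilHecke input. In the paper's proof one first writes the curl as the right closure of a single $ii$-crossing, then inserts the identity of two strands in the form given by \eqref{eq_dot_slide_ii-gen-cyc} (a difference of dotted crossings) above that crossing; the term containing $\psi_{ii}^2$ dies by the $(\alpha_i,\alpha_i)=2$ case of \eqref{eq_r2_ij-gen-cyc}, leaving the closure of a double crossing with a single dot trapped between the two crossings. After rotating by cyclicity, this is the closure of a sideways double crossing whose closure arc carries \emph{one dot}, and only now does \eqref{eq_FE-cyc} apply: its bubble sum is empty because the inner weight satisfies $\la i,\lambda+\alpha_i\ra\geq 2$, and the surviving term is $-\,\Id_{\cal{E}_i\onel}$ times a clockwise $i$-bubble with a single dot in weight $\lambda+\alpha_i$. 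That bubble equals $c_{i,\lambda+\alpha_i}=c_{i,\lambda}$ when $\la i,\lambda\ra=0$ and vanishes by \eqref{eq_positivity_bubbles-cyc} when $\la i,\lambda\ra>0$. It is precisely this dot, created by the nilHecke relation, that your argument has no mechanism to produce; without it the degree and the bubble value both come out wrong.
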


\begin{proof}
The first claim is proven as follows:
\begin{equation}
  \xy 0;/r.17pc/:
  (14,8)*{\l};
  (-3,-10)*{};(3,5)*{} **\crv{(-3,-2) & (2,1)}?(1)*\dir{>};?(.15)*\dir{>};
    (3,-5)*{};(-3,10)*{} **\crv{(2,-1) & (-3,2)}?(.85)*\dir{>} ?(.1)*\dir{>};
  (3,5)*{}="t1";  (9,5)*{}="t2";
  (3,-5)*{}="t1'";  (9,-5)*{}="t2'";
   "t1";"t2" **\crv{(4,8) & (9, 8)};
   "t1'";"t2'" **\crv{(4,-8) & (9, -8)};
   "t2'";"t2" **\crv{(10,0)} ;
   (-6,-8)*{\scs i};
 \endxy
\;\;  = \;\;
 \vcenter{
  \xy 0;/r.17pc/:
  (12,8)*{\lambda};
   (3,-4)*{};(-3,4)*{} **\crv{(3,-1) & (-3,1)};
   (-3,-4)*{};(3,4)*{} **\crv{(-3,-1) & (3,1)};
   (3,4)*{};(3,12)*{} **\dir{-};
   (-3,4)*{};(-3,12)*{} **\dir{-};
    (-3,12);(-3,16) **\dir{-} ?(1)*\dir{>};
    (-3,-10);(-3,-4) **\dir{-};
  (9,12)*{}="t1";
  (3,12)*{}="t2";
  (9,-4)*{}="t1'";
  (3,-4)*{}="t2'";
   "t1";"t2" **\crv{(9,16) & (3, 16)};
   "t1'";"t2'" **\crv{(9,-9) & (3, -9)};
  "t1'";"t1" **\dir{-} ?(.5)*\dir{<};
  (-6,-8)*{\scs i};
 \endxy}
\;\; \refequal{\eqref{eq_dot_slide_ii-gen-cyc}} \;\;
 \vcenter{
  \xy 0;/r.17pc/:
  (14,8)*{\lambda};
   (3,-4)*{};(-3,4)*{} **\crv{(3,-1) & (-3,1)};
   (-3,-4)*{};(3,4)*{} **\crv{(-3,-1) & (3,1)};
   (3,4)*{};(-3,12)*{} **\crv{(3,7) & (-3,9)};
   (-3,4)*{};(3,12)*{} **\crv{(-3,7) & (3,9)};
    (-3,12);(-3,16) **\dir{-} ?(1)*\dir{>};
    (-3,-10);(-3,-4) **\dir{-};
  (9,12)*{}="t1";
  (3,12)*{}="t2";
  (9,-4)*{}="t1'";
  (3,-4)*{}="t2'";
   "t1";"t2" **\crv{(9,16) & (3, 16)};
   "t1'";"t2'" **\crv{(9,-9) & (3, -9)};
  "t1'";"t1" **\dir{-} ?(.5)*\dir{<};
  (-6,-8)*{\scs i};
  (-3,5)*{\bullet};
 \endxy}
\;\; + \;\;
 \vcenter{
  \xy 0;/r.17pc/:
  (14,8)*{\lambda};
   (3,-4)*{};(-3,4)*{} **\crv{(3,-1) & (-3,1)};
   (-3,-4)*{};(3,4)*{} **\crv{(-3,-1) & (3,1)};
   (3,4)*{};(-3,12)*{} **\crv{(3,7) & (-3,9)};
   (-3,4)*{};(3,12)*{} **\crv{(-3,7) & (3,9)};
    (-3,12);(-3,16) **\dir{-} ?(1)*\dir{>};
    (-3,-10);(-3,-4) **\dir{-};
  (9,12)*{}="t1";
  (3,12)*{}="t2";
  (9,-4)*{}="t1'";
  (3,-4)*{}="t2'";
   "t1";"t2" **\crv{(9,16) & (3, 16)};
   "t1'";"t2'" **\crv{(9,-9) & (3, -9)};
  "t1'";"t1" **\dir{-} ?(.5)*\dir{<};
  (-6,-8)*{\scs i};
  (3,11)*{\bullet};
 \endxy}%
\;\; \refequal{\eqref{eq_r2_ij-gen-cyc}} \;\;
 \vcenter{
  \xy 0;/r.17pc/:
  (14,8)*{\lambda};
   (3,-4)*{};(-3,4)*{} **\crv{(3,-1) & (-3,1)};
   (-3,-4)*{};(3,4)*{} **\crv{(-3,-1) & (3,1)};
   (3,4)*{};(-3,12)*{} **\crv{(3,7) & (-3,9)};
   (-3,4)*{};(3,12)*{} **\crv{(-3,7) & (3,9)};
    (-3,12);(-3,16) **\dir{-} ?(1)*\dir{>};
    (-3,-10);(-3,-4) **\dir{-};
  (9,12)*{}="t1";
  (3,12)*{}="t2";
  (9,-4)*{}="t1'";
  (3,-4)*{}="t2'";
   "t1";"t2" **\crv{(9,16) & (3, 16)};
   "t1'";"t2'" **\crv{(9,-9) & (3, -9)};
  "t1'";"t1" **\dir{-} ?(.5)*\dir{<};
  (-6,-8)*{\scs i};
  (-3,5)*{\bullet};
 \endxy} \nn
\end{equation}
then using cyclicity this is equal to
\begin{equation}
=\;\;
  \vcenter{
  \xy 0;/r.17pc/:
  (10,8)*{\lambda};
   (3,-4)*{};(-3,4)*{} **\crv{(3,-1) & (-3,1)};
   (-3,-4)*{};(3,4)*{} **\crv{(-3,-1) & (3,1)};
   (3,4)*{};(-3,12)*{} **\crv{(3,7) & (-3,9)};
   (-3,4)*{};(3,12)*{} **\crv{(-3,7) & (3,9)};
    (3,12);(3,16) **\dir{-} ?(1)*\dir{>};
    (3,-10);(3,-4) **\dir{-};
  (-9,12)*{}="t1";
  (-3,12)*{}="t2";
  (-9,-4)*{}="t1'";
  (-3,-4)*{}="t2'";
   "t1";"t2" **\crv{(-9,16) & (-3, 16)};
   "t1'";"t2'" **\crv{(-9,-9) & (-3, -9)};
  "t1'";"t1" **\dir{-} ?(.75)*\dir{>};
  (6,-8)*{\scs i};
  (-9,3)*{\bullet};
 \endxy}
\;\; \refequal{\eqref{eq_FE-cyc}} \;\;
-\;
\vcenter{
\xy 0;/r.17pc/:
 (-6,-2)*{\icbub{}{i}};
 (3,-10);(3,10) **\dir{-} ?(1)*\dir{>};
 (6,-8)*{\scs i};
 (10,6)*{\lambda};
 \endxy}
\end{equation}
and the claim follows since a clockwise oriented $i$-labelled bubble with a single dot in weight $\lambda+\alpha_i$ vanishes if $\la i, \l\ra >0$ and is equal to $c_{i,\l+\alpha_i}=c_{i,\l}$.  The second claim is proven similarly.
\end{proof}

\begin{lem} The following relations
 \begin{equation}
  \xy 0;/r.17pc/:
  (14,8)*{\l};
  (-3,-10)*{};(3,5)*{} **\crv{(-3,-2) & (2,1)}?(1)*\dir{>};?(.15)*\dir{>};
    (3,-5)*{};(-3,10)*{} **\crv{(2,-1) & (-3,2)}?(.85)*\dir{>} ?(.1)*\dir{>};
  (3,5)*{}="t1";  (9,5)*{}="t2";
  (3,-5)*{}="t1'";  (9,-5)*{}="t2'";
   "t1";"t2" **\crv{(4,8) & (9, 8)};
   "t1'";"t2'" **\crv{(4,-8) & (9, -8)};
   "t2'";"t2" **\crv{(10,0)} ;
   (-6,-8)*{\scs i};
 \endxy \;\; =
 \;\; -
   \sum_{ \xy  (0,3)*{\scs f_1+f_2+f_3}; (0,0)*{\scs =-\la i,\l\ra};\endxy}
 \xy 0;/r.17pc/:
  (19,4)*{\l};
  (0,0)*{\bbe{}};(-2,-8)*{\scs };
  (-2,-8)*{\scs i};
  (12,-2)*{\icbub{\la i,\l\ra-1+f_2}{i}};
  (0,6)*{\bullet}+(3,-1)*{\scs f_1};
 \endxy
\qquad
  \xy 0;/r.17pc/:
  (-14,8)*{\l};
  (3,-10)*{};(-3,5)*{} **\crv{(3,-2) & (-2,1)}?(1)*\dir{>};?(.15)*\dir{>};
    (-3,-5)*{};(3,10)*{} **\crv{(-2,-1) & (3,2)}?(.85)*\dir{>} ?(.1)*\dir{>};
  (-3,5)*{}="t1";  (-9,5)*{}="t2";
  (-3,-5)*{}="t1'";  (-9,-5)*{}="t2'";
   "t1";"t2" **\crv{(-4,8) & (-9, 8)};
   "t1'";"t2'" **\crv{(-4,-8) & (-9, -8)};
   "t2'";"t2" **\crv{(-10,0)} ;
   (6,-8)*{\scs i};
 \endxy \;\; = \;\;
 \sum_{ \xy  (0,3)*{\scs g_1+g_2+g_3}; (0,0)*{\scs =\la i,\l\ra};\endxy}
  \xy 0;/r.17pc/:
  (5,-8)*{\scs i};
  (-12,8)*{\l};
  (3,0)*{\bbe{}};(2,-8)*{\scs};
  (-12,-2)*{\iccbub{-\la i,\l\ra-1+g_2}{i}};
  (3,6)*{\bullet}+(3,-1)*{\scs g_1};
 \endxy
\end{equation}
hold in $\Ucat_Q^{cyc}(\mf{g})$.
\end{lem}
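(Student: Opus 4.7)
The approach is to mimic the argument of Lemma \ref{lem-partcurl} without the restriction that $\la i,\l\ra \ge 0$ (or $\le 0$ for the downward curl), so that the sums arising from the sideways-crossing expansion relations no longer collapse to a single bubble term. The workhorses are the extended $\mathfrak{sl}_2$-relations \eqref{eq_FE-cyc} and \eqref{eq_EF-cyc}, which express $\sF_i\sE_i\onel$ and $\sE_i\sF_i\onel$ respectively as a signed upward crossing plus a sum of bubble--cup--cap terms; the strategy is to convert the curl into a sideways crossing via cyclicity, apply one of these expansions, and observe that the unwanted upward-crossing term collapses to zero.

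For the upward curl, I would first apply the nilHecke dot slide \eqref{eq_dot_slide_ii-gen-cyc} to rewrite the curl as a sum of two crossing-squared configurations with a dot on one or the other strand; the variant with the dot in the "wrong" position is killed by the $i=j$ case of \eqref{eq_r2_ij-gen-cyc}. The surviving term, after rearrangement using cyclicity (\eqref{eq_cyclic_dot-cyc}, \eqref{eq_cyclic}) and the zigzag identities (\eqref{eq_biadjoint1-cyc}, \eqref{eq_biadjoint2-cyc}), becomes a diagram containing a sideways $\sF_i\sE_i$ crossing with an extra dot. Applying \eqref{eq_FE-cyc}, the $-\sE_i\sF_i$ contribution once again contains a double-crossing of like color strands which vanishes by \eqref{eq_r2_ij-gen-cyc}, leaving only the bubble--cup--cap sum, which is precisely the claimed right-hand side. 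The downward curl is handled in the completely analogous way, with \eqref{eq_EF-cyc} in place of \eqref{eq_FE-cyc}.

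\textbf{Main obstacle.} The chief technical difficulty is bookkeeping. The index of the target sum is $f_1+f_2+f_3 = -\la i,\l\ra$, one larger than the corresponding sum in \eqref{eq_FE-cyc}, reflecting the extra dot introduced by the nilHecke maneuver; this must be combined with the dots already present in \eqref{eq_FE-cyc} via a careful reindexing. A secondary subtlety is that the terms in the sum straddle the boundary between true bubbles and the fake bubbles defined by \eqref{eq_fake_nleqz-cyc} and \eqref{eq_fake_ngeqz-cyc}, and one must verify that the scalar factors $c_{i,\l}$ arising from degree-zero (fake) bubbles are consistent with the claimed normalization. A useful sanity check at the end is specializing the formula to $\la i,\l\ra \ge 0$ and recovering Lemma \ref{lem-partcurl} (and symmetrically, $\la i,\l\ra \le 0$ for the downward curl).
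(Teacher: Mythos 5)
Your overall strategy---rerun the computation of Lemma~\ref{lem-partcurl} without the sign restriction on $\la i,\l\ra$ so that the summation in \eqref{eq_FE-cyc} survives---is viable and close in spirit to what the paper does (the paper splits into cases, quoting Lemma~\ref{lem-partcurl} when $\la i,\l\ra\ge 0$ and, when $\la i,\l\ra<0$, capping off \eqref{eq_FE-cyc} decorated with $-\la i,\l\ra$ dots, deferring the bookkeeping to Section 3.6.1 of \cite{Lau3}). However, your execution fails at the decisive step. When you substitute \eqref{eq_FE-cyc} into the rearranged diagram, the contribution of the first term on its right-hand side (the bigon of sideways crossings, equivalently the term $-\,{\rm Id}_{\sF_i\sE_i\onel}$ once you solve for the bigon) does \emph{not} vanish by \eqref{eq_r2_ij-gen-cyc}: closed up by the outer cap and the dotted cup, it produces a clockwise $i$-labelled bubble with a single dot sitting in the region of weight $\l+\alpha_i$, to the \emph{left} of the strand. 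That term is exactly what yields the nonzero value $-c_{i,\l}$ when $\la i,\l\ra=0$ in Lemma~\ref{lem-partcurl}; if it vanished as you claim, your own proposed sanity check would already fail, since the bubble--cup--cap sum of \eqref{eq_FE-cyc} is empty for $\la i,\l\ra\ge 0$ and you would conclude that the curl is identically zero there.

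Relatedly, the bookkeeping you flag as the main obstacle cannot be fixed by reindexing the \eqref{eq_FE-cyc} sum alone. Dressing that sum (over $g_1+g_2+g_3=-\la i,\l\ra-1$) with the one extra dot from the nilHecke maneuver produces the term with $f_1=g_1+g_3+1$ dots on the strand and $g_2$ dots beyond $\spadesuit$ on the bubble, so the pair $(f_1,f_2)=(g_1+g_3+1,g_2)$ occurs with multiplicity $f_1$, not $1$. The uniform coefficients of the stated formula emerge only after you transport the bubble you discarded from the weight-$(\l+\alpha_i)$ region to the right of the strand using the $i=j$ bubble slide of Section~\ref{sec:bubble}, whose coefficients are $m+1-f$, and add the two contributions: $-(f_1+1)+f_1=-1$. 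So the missing term is not negligible---it is precisely what corrects the multiplicities (and, for $\la i,\l\ra\ge0$, is the entire answer). Note finally that since the summand involves only $f_1$ and $f_2$, degree homogeneity forces the effective summation to run over $f_1+f_2=-\la i,\l\ra$, consistent with the $m=0$ case of the dotted-curl Proposition that follows.
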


\begin{proof}
 Note that when either summation is nonzero then the bubbles appearing in the equation are fake bubbles.  We prove the first identity. The second is proven similarly.

When $\la i,\l\ra \geq 0$ this claim follows from Lemma~\ref{lem-partcurl}.  The case $\la i,\l\ra<0$ can be proven by capping of \eqref{eq_FE-cyc} with $-\la i,\l \ra$ dots and simplifying the resulting equation. For more details see for example Section 3.6.1 of \cite{Lau3}.
\end{proof}

Inductively using \eqref{eq_dot_slide_ii-gen-cyc} it is easy to show the following Proposition.
\begin{prop}
\begin{equation}
  \xy 0;/r.17pc/:
  (14,8)*{\l};
  (-3,-10)*{};(3,5)*{} **\crv{(-3,-2) & (2,1)}?(1)*\dir{>};?(.15)*\dir{>};
    (3,-5)*{};(-3,10)*{} **\crv{(2,-1) & (-3,2)}?(.85)*\dir{>} ?(.1)*\dir{>};
  (3,5)*{}="t1";  (9,5)*{}="t2";
  (3,-5)*{}="t1'";  (9,-5)*{}="t2'";
   "t1";"t2" **\crv{(4,8) & (9, 8)};
   "t1'";"t2'" **\crv{(4,-8) & (9, -8)};
   "t2'";"t2" **\crv{(10,0)} ;
(9,0)*{\bullet}+(3,2)*{\scs m};
   (-6,-8)*{\scs i};
 \endxy \;\; =
 \;\; -
   \sum_{ \xy  (0,3)*{\scs f_1+f_2}; (0,0)*{\scs =m-\la i,\l\ra};\endxy}
 \xy 0;/r.17pc/:
  (19,4)*{\l};
  (0,0)*{\bbe{}};(-2,-8)*{\scs };
  (-2,-8)*{\scs i};
  (12,-2)*{\icbub{\la i,\l\ra-1+f_2}{i}};
  (0,6)*{\bullet}+(3,-1)*{\scs f_1};
 \endxy
\qquad
  \xy 0;/r.17pc/:
  (-14,8)*{\l};
  (3,-10)*{};(-3,5)*{} **\crv{(3,-2) & (-2,1)}?(1)*\dir{>};?(.15)*\dir{>};
    (-3,-5)*{};(3,10)*{} **\crv{(-2,-1) & (3,2)}?(.85)*\dir{>} ?(.1)*\dir{>};
  (-3,5)*{}="t1";  (-9,5)*{}="t2";
  (-3,-5)*{}="t1'";  (-9,-5)*{}="t2'";
   "t1";"t2" **\crv{(-4,8) & (-9, 8)};
   "t1'";"t2'" **\crv{(-4,-8) & (-9, -8)};
   "t2'";"t2" **\crv{(-10,0)} ;
(-9,0)*{\bullet}+(-3,2)*{\scs m};
   (6,-8)*{\scs i};
 \endxy \;\; = \;\;
 \sum_{ \xy  (0,3)*{\scs g_1+g_2}; (0,0)*{\scs =m+\la i,\l\ra};\endxy}
  \xy 0;/r.17pc/:
  (5,-8)*{\scs i};
  (-12,8)*{\l};
  (3,0)*{\bbe{}};(2,-8)*{\scs};
  (-12,-2)*{\iccbub{-\la i,\l\ra-1+g_2}{i}};
  (3,6)*{\bullet}+(3,-1)*{\scs g_1};
 \endxy
\end{equation}
hold in $\Ucat_Q^{cyc}(\mf{g})$.
\end{prop}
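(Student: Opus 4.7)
The plan is to induct on $m$, with the case $m=0$ provided by the preceding Lemma.

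For the inductive step, assume the identity holds for $m-1$ and place the $m$ dots on the loop of the curl just above the self-intersection. Applying the nilHecke dot sliding relation \eqref{eq_dot_slide_ii-gen-cyc} at the crossing moves one of the dots across the self-intersection and produces two pieces: first, a curl with $m-1$ dots on the loop together with one extra dot on the ``straight'' strand which, after pulling the loop straight via cyclicity \eqref{eq_cyclic} and the biadjunction relations \eqref{eq_biadjoint1-cyc}--\eqref{eq_biadjoint2-cyc}, becomes a single dot on the vertical line below; second, a ``smoothed'' correction in which the crossing is replaced by two parallel identity strands, so that the diagram becomes a vertical line composed with a cap--cup carrying $m-1$ dots, that is, a bubble (or fake bubble) of the appropriate degree along the vertical strand.

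Applying the inductive hypothesis to the curl with $m-1$ dots rewrites the first piece as the claimed sum indexed by $f_1+f_2 = (m-1)-\la i,\l\ra$; incorporating the extra dot on the vertical line then shifts $f_1 \mapsto f_1+1$. The second piece contributes a term in which the bubble has exactly $\la i,\l\ra - 1 + (m-\la i,\l\ra)$ dots and the vertical line carries no additional dots (corresponding to $f_1 = 0$, $f_2 = m - \la i,\l\ra$ in the target sum, or vice versa depending on orientation). Reindexing and combining these contributions, with any product of bubbles that arises being collapsed to a single bubble via the infinite Grassmannian relation \eqref{eq_infinite_Grass-cyc}, yields the desired formula with sum over $f_1+f_2 = m-\la i,\l\ra$. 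The second (left-handed) curl identity is proven by the mirror argument applied to \eqref{eq_EF-cyc} instead of \eqref{eq_FE-cyc}.

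The main obstacle is bookkeeping rather than conceptual: one must track which bubbles are genuine and which are fake as $\la i, \l\ra$ varies in sign, and verify that the inductive combination of the two pieces indeed assembles into the claimed sum uniformly. The fake bubble definitions \eqref{eq_fake_nleqz-cyc} and \eqref{eq_fake_ngeqz-cyc} together with the infinite Grassmannian relation \eqref{eq_infinite_Grass-cyc} are engineered precisely to make this bookkeeping uniform in $\l$, so no new ideas beyond those used for the $m=0$ case are required; the induction step is essentially a single application of \eqref{eq_dot_slide_ii-gen-cyc} followed by reindexing.
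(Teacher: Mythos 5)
Your proposal is correct and takes essentially the same route as the paper, which disposes of this Proposition in one line by remarking that it follows ``inductively using \eqref{eq_dot_slide_ii-gen-cyc}'' from the $m=0$ case established in the preceding lemma. Your inductive step --- a single application of the nilHecke dot-slide at the self-crossing, yielding the curl with $m-1$ dots composed with a dot (the $f_1\geq 1$ terms after reindexing) plus the resolved-crossing term whose bubble carries $m-1$ dots (the $f_1=0$ term) --- is exactly the intended argument; the only superfluous ingredient is your appeal to the infinite Grassmannian relation, since no products of bubbles actually arise.
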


%
\subsection{Bubble slide equations}\label{sec:bubble}
%

In what follows it is often convenient to introduce a shorthand notation
\[
    \xy 0;/r.18pc/:
  (4,8)*{\lambda};
  (2,-2)*{\icbub{\spadesuit+m}{i}};
 \endxy \;\; := \;\;
   \xy 0;/r.18pc/:
  (4,8)*{\lambda};
  (2,-2)*{\icbub{\la i,\lambda\ra-1+m}{i}};
 \endxy
 \qquad
 \qquad
    \xy 0;/r.18pc/:
  (4,8)*{\lambda};
  (2,-2)*{\iccbub{\spadesuit+m}{i}};
 \endxy \;\; := \;\;
   \xy 0;/r.18pc/:
  (4,8)*{\lambda};
  (2,-2)*{\iccbub{-\la i,\lambda\ra-1+m}{i}};
 \endxy
\]
for all $\la i,\lambda\ra$.  Note that as long as $m \geq 0$ this notation makes sense even when $\spadesuit+m <0$.

Using cyclicity, \eqref{eq_r2_ij-gen-cyc} and \eqref{mixed_rel-cyc} one can prove that the following bubble slide equations
\begin{eqnarray*}
    \xy 0;/r.18pc/:
  (14,8)*{\lambda};
  (0,0)*{\bbe{}};
  (0,-12)*{\scs j};
  (12,-2)*{\iccbub{\spadesuit+m}{i}};
  (0,6)*{ }+(7,-1)*{\scs  };
 \endxy
 & \; = \; &
 \left\{
 \begin{array}{ccl}
  \xsum{f=0}{m}(m+1-f)
   \xy 0;/r.18pc/:
  (0,8)*{\lambda+\alpha_j};
  (12,0)*{\bbe{}};
  (12,-12)*{\scs j};
  (0,-2)*{\iccbub{\spadesuit+f}{i}};
  (12,6)*{\bullet}+(5,-1)*{\scs m-f};
 \endxy
    &  & \text{if $i=j$} \\ \\
     t_{ij}\;   \xy 0;/r.18pc/:
  (0,8)*{\lambda+\alpha_j};
  (12,0)*{\bbe{}};
  (11,-12)*{\scs j};
  (0,-2)*{\iccbub{\spadesuit+m}{i}};
 \endxy
 \;\; + \;\;  t_{ji} \;\;
  \xy 0;/r.18pc/:
  (0,8)*{\lambda+\alpha_j};
  (12,0)*{\bbe{}};
  (12,-12)*{\scs j};
  (0,-2)*{\iccbub{\spadesuit+m-d_{ij}}{i}};
  (12,6)*{\bullet}+(5,-1)*{\scs d_{ji}};
 \endxy
 \;\; + \;\; \xsum{p,q}{}s_{i,j}^{p,q} \;
   \xy 0;/r.18pc/:
  (0,8)*{\lambda+\alpha_j};
  (12,0)*{\bbe{}};
  (12,-12)*{\scs j};
  (0,-2)*{\iccbub{\spadesuit+m-d_{ij}+p}{i}};
  (12,6)*{\bullet}+(3,-1)*{\scs q};
 \endxy
   &  & \text{if $a_{ij} <0$} \\
 \qquad \qquad t_{ij} \; \xy 0;/r.18pc/:
  (0,8)*{\lambda+\alpha_j};
  (12,0)*{\bbe{}};
  (12,-12)*{\scs j};
  (0,-2)*{\iccbub{\spadesuit+m}{i}};
 \endxy  &  & \text{if $a_{ij}=0$}
 \end{array}
 \right.
\end{eqnarray*}
\begin{eqnarray*} 
    \xy 0;/r.18pc/:
  (15,8)*{\lambda};
  (11,0)*{\bbe{}};
  (11,-12)*{\scs j};
  (0,-2)*{\icbub{\spadesuit+m\quad }{i}};
 \endxy
   &\; = \; &
  \left\{\begin{array}{ccl}
     \xsum{f=0}{m}(m+1-f)
     \xy 0;/r.18pc/:
  (18,8)*{\lambda};
  (0,0)*{\bbe{}};
  (0,-12)*{\scs j};
  (14,-4)*{\icbub{\spadesuit+f}{i}};
  (0,6)*{\bullet }+(5,-1)*{\scs m-f};
 \endxy
         &  & \text{if $i=j$}  \\
  t_{ji}\;\;  \xy 0;/r.18pc/:
  (18,8)*{\lambda};
  (0,0)*{\bbe{}};
  (0,-12)*{\scs j};
  (12,-2)*{\icbub{\spadesuit+ m-d_{ij}}{i}};
    (0,6)*{\bullet }+(-5,-1)*{\scs d_{ji}};
 \endxy
 \;\;+\;\; t_{ij} \;
\xy 0;/r.18pc/:
  (18,8)*{\lambda};
  (0,0)*{\bbe{}};
  (0,-12)*{\scs j};
  (12,-2)*{\icbub{\spadesuit+ m}{i}};
 \endxy
  \;\;+\;\;
  \xsum{p,q}{} s_{j,i}^{p,q}
  \xy 0;/r.18pc/:
  (18,8)*{\lambda};
  (0,0)*{\bbe{}};
  (0,-12)*{\scs j};
  (0,6)*{\bullet }+(-3,-1)*{\scs p};
  (12,-2)*{\icbub{\spadesuit+ m-d_{ij}+q}{i}};
 \endxy
         & &  \text{if $a_{ij} <0$}\\
t_{ji} \;
    \xy 0;/r.18pc/:
  (15,8)*{\lambda};
  (0,0)*{\bbe{}};
  (0,-12)*{\scs j};
  (12,-2)*{\icbub{\spadesuit+m}{i}};
 \endxy          &  & \text{if $a_{ij} = 0$}
         \end{array}
 \right.
\end{eqnarray*}
hold in $\Ucat_Q^{cyc}(\mf{g})$. Inverting these formulas gives the expressions below.
 \begin{equation*}
      \xy 0;/r.18pc/:
  (15,8)*{\lambda};
  (0,0)*{\bbe{}};
  (0,-12)*{\scs j};
  (12,-2)*{\icbub{\spadesuit+m}{i}};
 \endxy
   =
  \left\{
  \begin{array}{cl}
     \xy 0;/r.18pc/:
  (0,8)*{\lambda+\alpha_j};
  (12,0)*{\bbe{}};
  (12,-12)*{\scs j};
  (0,-2)*{\icbub{\spadesuit+(m-2)}{i}};
  (12,6)*{\bullet}+(3,-1)*{\scs 2};
 \endxy
   -2 \;
         \xy 0;/r.18pc/:
  (0,8)*{\lambda+\alpha_j};
  (12,0)*{\bbe{}};
  (12,-12)*{\scs j};
  (0,-2)*{\icbub{\spadesuit+(m-1)}{i}};
  (12,6)*{\bullet}+(8,-1)*{\scs };
 \endxy
 + \;\;
     \xy 0;/r.18pc/:
  (0,8)*{\lambda+\alpha_j};
  (12,0)*{\bbe{}};
  (12,-12)*{\scs j};
  (0,-2)*{\icbub{\spadesuit+m}{i}};
  (12,6)*{}+(8,-1)*{\scs };
 \endxy
  &   \text{if $i = j$} \\
  -\;\xsum{ \xy (0,4)*{\;};(0,-1)*{\scs f\geq 0}; (0,-4.3)*{\scs k \geq 0}; \endxy}{} \binom{f+k}{k} (-t_{ij}^{-1}t_{ji})^{f}(-t_{ij})^{-(k+1)}
\xsum{ \xy (0,4)*{\;};(0,-1)*{\scs p_1,\ldots,p_k\geq 0}; (0,-4.3)*{\scs q_1,\ldots,q_k\geq 0}; \endxy}{}
  \left(\xprod{\ell=1}{k}s_{j,i}^{p_{\ell},q_{\ell}} \right)
  \xy 0;/r.18pc/:
  (18,2)*{\lambda};
  (14,0)*{\bbe{}};
  (14,-12)*{\scs j};
  (-6,-2)*{\icbub{\spadesuit+m-fd_{ij} +q_1 + \dots +q_{k}}{i}};
  (14,6)*{\bullet}+(-14,2)*{\scs fd_{ji} + p_1 + \dots + p_{k} };
 \endxy &   \text{if $a_{ij} <0$}
  \end{array}
 \right.
\end{equation*}
\begin{equation*} \label{cc_slide_right}
    \xy 0;/r.18pc/:
  (0,8)*{\lambda+\alpha_j};
  (12,0)*{\bbe{}};
  (12,-12)*{\scs j};
  (0,-2)*{\iccbub{\spadesuit+m}{i}};
  (12,6)*{}+(8,-1)*{\scs };
 \endxy \hspace{-0.1in}
  =
\left\{
\begin{array}{cc}
    \xy 0;/r.18pc/:
  (15,8)*{\lambda};
  (0,0)*{\bbe{}};
  (0,-12)*{\scs j};
  (12,-2)*{\iccbub{\spadesuit+(m-2)}{i}};
  (0,6)*{\bullet }+(3,1)*{\scs 2};
 \endxy
  -2 \;
      \xy 0;/r.18pc/:
  (15,8)*{\lambda};
  (0,0)*{\bbe{}};
  (0,-12)*{\scs j};
  (12,-2)*{\iccbub{\spadesuit+(m-1)}{i}};
  (0,6)*{\bullet }+(5,-1)*{\scs };
 \endxy
 + \;\;
      \xy 0;/r.18pc/:
  (15,8)*{\lambda};
  (0,0)*{\bbe{}};
  (0,-12)*{\scs j};
  (12,-2)*{\iccbub{\spadesuit+m}{i}};
 \endxy
  &   \text{if $i=j$} \\
-\;\xsum{ \xy (0,4)*{\;};(0,-1)*{\scs f\geq 0}; (0,-4.3)*{\scs k \geq 0}; \endxy}{} \binom{f+k}{k} (-t_{ij}^{-1}t_{ji})^{f}(-t_{ij})^{-(k+1)}
\xsum{ \xy (0,4)*{\;};(0,-1)*{\scs p_1,\ldots,p_k\geq 0}; (0,-4.3)*{\scs q_1,\ldots,q_k\geq 0}; \endxy}{}
  \left(\xprod{\ell=1}{k}s_{i,j}^{q_{\ell},p_{\ell}} \right)
    \xy 0;/r.18pc/:
  (29,0)*{\lambda};
  (0,0)*{\bbe{}};
  (0,-12)*{\scs j};
  (23,-2)*{\iccbub{\spadesuit+m-fd_{ij}+q_1+\dots +q_k}{i}};
  (0,6)*{\bullet }+(15,1)*{\scs fd_{ji}+p_1+\dots+p_k};
 \endxy
    &   \text{if $a_{ij} <0$}
\end{array}
\right.
\end{equation*}

%
\subsection{Triple intersections}\label{sec:triple}
%

Using cyclicity and \eqref{eq_r3_easy-gen-cyc} one can show that for
all $i,j,k \in I$ not satisfying $i=j=k$ the equation
\begin{equation}
\vcenter{\xy 0;/r.17pc/:
    (-4,-4)*{};(4,4)*{} **\crv{(-4,-1) & (4,1)}?(0)*\dir{};
    (4,-4)*{};(-4,4)*{} **\crv{(4,-1) & (-4,1)}?(0)*\dir{<};
    (4,4)*{};(12,12)*{} **\crv{(4,7) & (12,9)}?(1)*\dir{};
    (12,4)*{};(4,12)*{} **\crv{(12,7) & (4,9)}?(1)*\dir{};
    (-4,12)*{};(4,20)*{} **\crv{(-4,15) & (4,17)}?(1)*\dir{};
    (4,12)*{};(-4,20)*{} **\crv{(4,15) & (-4,17)}?(1)*\dir{};
    (-4,4)*{}; (-4,12) **\dir{-};
    (12,-4)*{}; (12,4) **\dir{-};
    (12,12)*{}; (12,20) **\dir{-};
    (4,20); (4,21) **\dir{-}?(1)*\dir{};
    (-4,20); (-4,21) **\dir{-}?(1)*\dir{>};
    (12,20); (12,21) **\dir{-}?(1)*\dir{>};
   (18,8)*{\lambda};  (-6,-3)*{\scs i};
  (6.5,-3)*{\scs j};
  (15,-3)*{\scs k};
\endxy}
 \;\; =\;\;
\vcenter{\xy 0;/r.17pc/:
    (4,-4)*{};(-4,4)*{} **\crv{(4,-1) & (-4,1)}?(0)*\dir{};
    (-4,-4)*{};(4,4)*{} **\crv{(-4,-1) & (4,1)}?(0)*\dir{<};
    (-4,4)*{};(-12,12)*{} **\crv{(-4,7) & (-12,9)}?(1)*\dir{};
    (-12,4)*{};(-4,12)*{} **\crv{(-12,7) & (-4,9)}?(1)*\dir{};
    (4,12)*{};(-4,20)*{} **\crv{(4,15) & (-4,17)}?(1)*\dir{};
    (-4,12)*{};(4,20)*{} **\crv{(-4,15) & (4,17)}?(1)*\dir{};
    (4,4)*{}; (4,12) **\dir{-};
    (-12,-4)*{}; (-12,4) **\dir{-};
    (-12,12)*{}; (-12,20) **\dir{-};
    (4,20); (4,21) **\dir{-}?(1)*\dir{>};
    (-4,20); (-4,21) **\dir{-}?(1)*\dir{};
    (-12,20); (-12,21) **\dir{-}?(1)*\dir{>};
  (10,8)*{\lambda};
  (-14,-3)*{\scs i};
  (-6.5,-3)*{\scs j};
  (6,-3)*{\scs k};
\endxy}
\end{equation}
holds in $\Ucat_Q^{cyc}(\mf{g})$.  Similarly, it is not hard to show that if $(\alpha_i,\alpha_j)<0$, then
\begin{equation}
\;\; \vcenter{
 \xy 0;/r.17pc/:
    (-4,-4)*{};(4,4)*{} **\crv{(-4,-1) & (4,1)}?(0)*\dir{<};
    (4,-4)*{};(-4,4)*{} **\crv{(4,-1) & (-4,1)}?(0)*\dir{};
    (4,4)*{};(12,12)*{} **\crv{(4,7) & (12,9)}?(1)*\dir{};
    (12,4)*{};(4,12)*{} **\crv{(12,7) & (4,9)}?(1)*\dir{};
    (-4,12)*{};(4,20)*{} **\crv{(-4,15) & (4,17)}?(1)*\dir{>};
    (4,12)*{};(-4,20)*{} **\crv{(4,15) & (-4,17)}?(1)*\dir{>};
    (-4,4)*{}; (-4,12) **\dir{-};
    (12,-4)*{}; (12,4) **\dir{-};
    (12,12)*{}; (12,20) **\dir{-}?(1)*\dir{};
  (-6,-3)*{\scs i};
  (6,-3)*{\scs i};
  (14,-3)*{\scs j};
\endxy}
\quad - \quad
 \vcenter{
 \xy 0;/r.17pc/:
    (4,-4)*{};(-4,4)*{} **\crv{(4,-1) & (-4,1)}?(1)*\dir{};
    (-4,-4)*{};(4,4)*{} **\crv{(-4,-1) & (4,1)}?(1)*\dir{};
    (-4,4)*{};(-12,12)*{} **\crv{(-4,7) & (-12,9)}?(1)*\dir{};
    (-12,4)*{};(-4,12)*{} **\crv{(-12,7) & (-4,9)}?(1)*\dir{};
    (4,12)*{};(-4,20)*{} **\crv{(4,15) & (-4,17)}?(1)*\dir{>};
    (-4,12)*{};(4,20)*{} **\crv{(-4,15) & (4,17)}?(1)*\dir{};
    (4,4)*{}; (4,12) **\dir{-};
    (-12,-4)*{}; (-12,4) **\dir{-}?(0)*\dir{<};
    (-12,12)*{}; (-12,20) **\dir{-}?(1)*\dir{>};
  (6,-3)*{\scs j};
  (-6,-3)*{\scs i};
  (-14,-3)*{\scs i};
\endxy}\;\;
 \;\; =\;\;
 -t_{ij}\sum_{\ell_1+\ell_2=d_{ij}-1} \;\;
 \vcenter{
\xy 0;/r.17pc/:
  (4,-4)*{};(-4,-4)*{} **\crv{(4,1) & (-4,1)}?(1)*\dir{>} ?(.5)*\dir{}+(0,0)*{\bullet}+(2,3)*{\scs \ell_1};
  (12,20)*{};(4,20)*{} **\crv{(12,15) & (4,15)}?(1)*\dir{>} ?(.5)*\dir{}+(0,0)*{\bullet}+(1,-3)*{\scs \ell_2};
  (12,-4)*{};(-4,20)*{} **\crv{(12,6) & (-4,14)}?(1)*\dir{>} ;
  (-6,-4)*{\scs i};     (6.1,-4)*{\scs i};
  (14,-4)*{\scs j};
 \endxy}
 \;\; - \;\;
 \sum_{p,q} s_{ij}^{pq} \sum_{\xy (0,2.3)*{\scs \ell_1+\ell_2}; (0,-1)*{\scs =p-1}; \endxy} \;
 \vcenter{
\xy 0;/r.17pc/:
  (4,-4)*{};(-4,-4)*{} **\crv{(4,1) & (-4,1)}?(1)*\dir{>} ?(.5)*\dir{}+(0,0)*{\bullet}+(2,3)*{\scs \ell_1};
  (12,20)*{};(4,20)*{} **\crv{(12,15) & (4,15)}?(1)*\dir{>} ?(.5)*\dir{}+(0,0)*{\bullet}+(1,-3)*{\scs \ell_2};
  (12,-4)*{};(-4,20)*{} **\crv{(12,6) & (-4,14)}?(1)*\dir{>}
    ?(.35)*\dir{}+(0,0)*{\bullet}+(2,3)*{\scs q};
  (-6,-4)*{\scs i};     (6.1,-4)*{\scs i};
  (14,-4)*{\scs j};
 \endxy}
\end{equation}
\begin{equation}
\;\; \vcenter{
 \xy 0;/r.17pc/:
    (-4,-4)*{};(4,4)*{} **\crv{(-4,-1) & (4,1)}?(0)*\dir{};
    (4,-4)*{};(-4,4)*{} **\crv{(4,-1) & (-4,1)}?(0)*\dir{};
    (4,4)*{};(12,12)*{} **\crv{(4,7) & (12,9)}?(0)*\dir{};
    (12,4)*{};(4,12)*{} **\crv{(12,7) & (4,9)}?(1)*\dir{};
    (-4,12)*{};(4,20)*{} **\crv{(-4,15) & (4,17)}?(1)*\dir{>};
    (4,12)*{};(-4,20)*{} **\crv{(4,15) & (-4,17)}?(1)*\dir{};
    (-4,4)*{}; (-4,12) **\dir{-};
    (12,-4)*{}; (12,4) **\dir{-}?(0)*\dir{<};
    (12,12)*{}; (12,20) **\dir{-}?(1)*\dir{>};
  (-6,-3)*{\scs j};
  (6,-3)*{\scs i};
  (14,-3)*{\scs i};
\endxy}
\quad - \quad
 \vcenter{
 \xy 0;/r.17pc/:
    (4,-4)*{};(-4,4)*{} **\crv{(4,-1) & (-4,1)}?(0)*\dir{<};
    (-4,-4)*{};(4,4)*{} **\crv{(-4,-1) & (4,1)}?(1)*\dir{};
    (-4,4)*{};(-12,12)*{} **\crv{(-4,7) & (-12,9)}?(1)*\dir{};
    (-12,4)*{};(-4,12)*{} **\crv{(-12,7) & (-4,9)}?(1)*\dir{};
    (4,12)*{};(-4,20)*{} **\crv{(4,15) & (-4,17)}?(1)*\dir{>};
    (-4,12)*{};(4,20)*{} **\crv{(-4,15) & (4,17)}?(1)*\dir{>};
    (4,4)*{}; (4,12) **\dir{-};
    (-12,-4)*{}; (-12,4) **\dir{-}?(0)*\dir{};
    (-12,12)*{}; (-12,20) **\dir{-}?(1)*\dir{};
  (6,-3)*{\scs i};
  (-6,-3)*{\scs i};
  (-14,-3)*{\scs j};
\endxy}\;\;
 \;\; =\;\;
 -t_{ij}\sum_{\ell_1+\ell_2=d_{ij}-1} \;\;
 \vcenter{
\xy 0;/r.17pc/:
  (-4,-4)*{};(4,-4)*{} **\crv{(-4,1) & (4,1)}?(1)*\dir{>} ?(.5)*\dir{}+(0,0)*{\bullet}+(-2,3)*{\scs \ell_1};
  (-12,20)*{};(-4,20)*{} **\crv{(-12,15) & (-4,15)}?(1)*\dir{>} ?(.5)*\dir{}+(0,0)*{\bullet}+(-1,-3)*{\scs \ell_2};
  (-12,-4)*{};(4,20)*{} **\crv{(-12,6) & (4,14)}?(1)*\dir{>} ;
  (6,-4)*{\scs j};     (-6.1,-4)*{\scs i};
  (-14,-4)*{\scs i};
 \endxy}
 \;\; - \;\;
 \sum_{p,q} s_{ij}^{pq} \sum_{\xy (0,2.3)*{\scs \ell_1+\ell_2};   (0,-1)*{\scs =p-1}; \endxy} \;
 \vcenter{
\xy 0;/r.17pc/:
    (-4,-4)*{};(4,-4)*{} **\crv{(-4,1) & (4,1)}?(1)*\dir{>} ?(.5)*\dir{}+(0,0)*{\bullet}+(-2,3)*{\scs \ell_1};
  (-12,20)*{};(-4,20)*{} **\crv{(-12,15) & (-4,15)}?(1)*\dir{>} ?(.5)*\dir{}+(0,0)*{\bullet}+(-1,-3)*{\scs \ell_2};
  (-12,-4)*{};(4,20)*{} **\crv{(-12,6) & (4,14)}?(1)*\dir{>}
    ?(.35)*\dir{}+(0,0)*{\bullet}+(-2,3)*{\scs q};
  (6,-4)*{\scs j};     (-6.1,-4)*{\scs i};
  (-14,-4)*{\scs i};
 \endxy}
\end{equation}
also hold in $\Ucat_Q^{cyc}(\mf{g})$.



\begin{thebibliography}{10}

\bibitem{BHLW}
A.~Beliakova, K.~Habiro, A.~Lauda, and B.~Webster.
\newblock Current algebras and categorified quantum groups.
\newblock 2014.
\newblock \href{http://arxiv.org/abs/arXiv:1412.1417}{arXiv:1412.1417}.

\bibitem{Brundan}
J.~Brundan.
\newblock Symmetric functions, parabolic category {$\cal{O}$}, and the
  {S}pringer fiber.
\newblock {\em Duke Math. J.}, 143(1):41--79, 2008.
\newblock \href{http://arxiv.org/abs/0608235}{arXiv:math/0608235}.

\bibitem{Brundan2}
J.~Brundan.
\newblock On the definition of {K}ac-{M}oody 2-category, 2015.
\newblock arxiv:1501.00350.

\bibitem{BK1}
J.~Brundan and A.~Kleshchev.
\newblock Blocks of cyclotomic {H}ecke algebras and {K}hovanov-{L}auda
  algebras.
\newblock {\em Invent. Math.}, 178(3):451--484, 2009.
\newblock \href{http://arxiv.org/abs/0808.2032}{arXiv:0808.2032}.

\bibitem{CLau}
S.~Cautis and A.~D. Lauda.
\newblock Implicit structure in 2-representations of quantum groups.
\newblock {\em Selecta Mathematica}, pages 1--44, 2014.
\newblock \href{http://arxiv.org/abs/arXiv:1111.1431}{arXiv:1111.1431}.

\bibitem{KL3}
M.~Khovanov and A.~Lauda.
\newblock A diagrammatic approach to categorification of quantum groups {III}.
\newblock {\em Quantum Topology}, 1:1--92, 2010.
\newblock \href{http://arxiv.org/abs/0807.3250}{arXiv:0807.3250}.

\bibitem{Lau1}
A.~D. Lauda.
\newblock A categorification of quantum sl(2).
\newblock {\em Adv. Math.}, 225:3327--3424, 2008.
\newblock \href{http://arxiv.org/abs/arXiv:0803.3652}{arXiv:0803.3652}.

\bibitem{Lau3}
A.~D. Lauda.
\newblock An introduction to diagrammatic algebra and categorified quantum
  ${\mathfrak{sl}}_2$.
\newblock {\em Bulletin Inst. Math. Academia Sinica}, 7:165--270, 2012.
\newblock \href{http://arxiv.org/abs/arXiv:1106.2128}{arXiv:1106.2128}.

\bibitem{LQR}
A.D. Lauda, H.~Queffelec, and D.~Rose.
\newblock Khovanov homology is a skew {H}owe 2-representation of categorified
  quantum sl(m).
\newblock 2012.
\newblock \href{http://arxiv.org/abs/arXiv:1212.6076}{arXiv:1212.6076}.

\bibitem{MSV}
M.~Mackaay, M.~Sto{\v{s}}i{\'c}, and P.~Vaz.
\newblock A diagrammatic categorification of the {$q$}-{S}chur algebra.
\newblock {\em Quantum Topol.}, 4(1):1--75, 2013.

\bibitem{MY}
M.Mackaay and Y.~Yonezawa.
\newblock sl({N})-{W}eb categories.
\newblock 2013.
\newblock \href{http://arxiv.org/abs/arXiv:1306.6242 }{arXiv:1306.6242 }.

\bibitem{QR}
H.~Queffelec and D.~Rose.
\newblock The $\mathfrak{sl}_n$ foam 2-category: a combinatorial formulation of
  {K}hovanov-{R}ozansky homology via categorical skew {H}owe duality.
\newblock 2014.
\newblock \href{http://arxiv.org/abs/arXiv:1405.5920}{arXiv:1405.5920}.

\bibitem{Rou2}
R.~Rouquier.
\newblock 2-{K}ac-{M}oody algebras, 2008.
\newblock arXiv:0812.5023.

\bibitem{SVV}
P.~Shan, M.~Varagnolo, and E.~Vasserot.
\newblock On the center of {Q}uiver-{H}ecke algebras.
\newblock 2014.
\newblock \href{http://arxiv.org/abs/arXiv:1411.4392}{arXiv:1411.4392}.

\bibitem{VV}
M.~Varagnolo and E.~Vasserot.
\newblock Canonical bases and {KLR}-algebras.
\newblock {\em J. Reine Angew. Math.}, 659:67--100, 2011.

\bibitem{Web5}
B.~Webster.
\newblock Knot invariants and higher representation theory.
\newblock 2013.
\newblock \href{http://arxiv.org/abs/1309.3796}{arXiv:1309.3796}.

\bibitem{Web6}
B.~Webster.
\newblock Centers of {KLR} algebras and cohomology rings of quiver varieties.
\newblock 2015.
\newblock \href{http://arxiv.org/abs/1504.04401}{arXiv:1504.04401}.

\end{thebibliography}

%

\end{document}